\numberwithin{equation}{section}
\newcommand{\mathset}[1]{\mathbbm{#1}}
\newcommand{\R}{\mathset{R}}
\newcommand{\mcalF}{\mathcal{F}}
\newcommand{\mcalM}{\mathcal{M}}
\newcommand{\Ds}{{}^s\!}
\newcommand{\Dt}{{}^t\!}
\newcommand{\Dsu}{{}^s}
\newcommand{\ca}{c\`adl\`ag}
\newcommand{\ilaw}{\mathrel{\smash{\stackrel{\scriptscriptstyle\mathcal{D}}{=}}}}
\newcommand{\Peq}{\mathrel{\smash{\stackrel{\scriptscriptstyle{P}}{=}}}}
\newcommand{\inl}{\mathrel{\smash{\stackrel{\scriptscriptstyle\mathrm{in}}{=}}}}
\newcommand{\dd}{\mathrm{d}}
\newcommand{\gen}{\textup{g}}
\newcommand{\inbull}{\mathrel{\smash{\stackrel{\scriptscriptstyle\mathrm{in}}{\bullet}}}}
\newcommand\bdot{\bm{.}}
\theoremstyle{plain}
\newtheorem{lemma}{Lemma}[section]
\newtheorem{proposition}[lemma]{Proposition}
\newtheorem{theorem}[lemma]{Theorem}
\newtheorem{corollary}[lemma]{Corollary}
\theoremstyle{definition}
\theoremstyle{definition}
\newtheorem{definition}[lemma]{Definition}
\theoremstyle{remark}
\newtheorem{remark}[lemma]{Remark}
\newtheorem{example}[lemma]{Example}
\title{Martingale-type processes indexed by the real line}
  \author{Andreas Basse-O'Connor$^*$\ \    Svend-Erik Graversen$^*$\ \  Jan Pedersen\thanks{Department of Mathematical Sciences,
 University of Aarhus,\newline  Ny Munkegade, DK-8000 \AA rhus
   C, Denmark. E-mails: \{basse matseg jan\}@imf.au.dk} }
\date{}
\begin{document}\maketitle

\begin{abstract}
Some  classes  of increment  martingales,  and the
corresponding  localized
classes, are studied. An increment martingale is indexed by 
 $\R$ and its  increment processes are martingales. We
focus primarily on the behavior   as
time goes to $-\infty$ in relation to  the quadratic
variation or the predictable quadratic variation, and we relate the
limiting behaviour to the martingale property.  Finally,
integration with respect to an increment martingale   is studied. 

\medskip 

\noindent \textit{Keywords: Martingales; increments; integration;
  compensators. }

\medskip 

\noindent \textit{AMS Subject classification (2010): 60G44; 60G48; 60H05}

\end{abstract}

\section{Introduction}
Stationary processes are widely used in many areas, and  
the key example   is a moving average,  that is,  a process $X$ of the form 
\begin{equation} \label{iip}
X_t=\int_{-\infty}^t \psi(t-s)\,\mathrm{d}M_s,\qquad t\in\R, 
\end{equation} 
where $M=(M_t)_{t\in\R}$ is a  process with stationary increments and
$\psi: [0,\infty) \to \R$ is deterministic.  
A particular  example is a stationary Ornstein-Uhlenbeck process which
corresponds to the case  
$\psi(t)=e^{-\lambda t}$  and $M$ is a Brownian motion
indexed by $\R$. See  Doob (1990)   for second order properties of moving
averages and   Barndorff-Nielsen and Schmiegel (2008)  for their 
applications  in turbulence. Also note that \eqref{iip} can be
generalised in many directions. For example, if instead of integrating 
from $-\infty$ to $t$ we integrate over $\R$ and replace $\psi(t-s)$
by, say,  $\phi(t-s)- \phi(-s)$, where $\phi:  \R\to \R$ is
deterministic,    we would also be able to model
processes with stationary increments. In particular, in this  setting 
the  fractional Brownian with Hurst parameter
$H\in (0,1)$ corresponds   to  $\phi(t) =
t^{H-1/2}1_{\R_+}(t)$; see  Samorodnitsky and Taqqu (1994, Section
7.2). 

Integration with respect to a local martingale indexed by $\R_+$ is
well-developed and in this case one can even allow the integrand to be
random.   However,  when trying
to define a stochastic integral from $-\infty$ as in \eqref{iip}
with  random integrands,    the  class of local martingales indexed by  $\R$
does not  provide  the right framework for $M=(M_t)_{t\in \R}$; indeed, in
simple cases,   such as  when $M$   is a Brownian motion, $M$ is not a
martingale in any filtration.  Rather, it seems  better to think
of $M$ as a process for which the   increment    $(M_{t+s}- M_s)_{t\geq
  0}$ is a  martingale  for all $s\in \R$. 
It is natural to call such a
process an  \textit{increment martingale.} Another interesting
example within this framework is a diffusion on natural scale  started in
$\infty$ (cf.\ Example \ref{ivba}); indeed, if  $\infty$ is an entrance
boundary  then all  increments are local martingales but the diffusion
itself is not. Thus,  the class of increment (local) martingales indexed
by $\R$ is
strictly larger than the class of (local) martingales indexed by $\R$
and it contains several interesting examples. 
 We refer to
Subsection \ref{dtr} for a discussion of the  relations to other kinds
of  martingale-type processes indexed by  $\R$.

In the present paper we introduce and study basic properties of some 
classes  of increment martingales  $M=(M_t)_{t\in \R}$ and the corresponding  localized
classes. Some of the problems studied are the following.  Necessary and sufficient conditions for $M$ to be a
local martingale up to addition of a random variable will be given
when $M$ is either an increment  martingale  or an  increment square
integrable martingale. In addition,  we give various 
necessary and sufficient conditions for $M_{-\infty}= \lim_{t\to
  -\infty} M_t$ to exist  $P$-a.s.\ and $M-M_{-\infty}$ to be a local
martingale expressed  in terms of either the  predictable quadratic
variation 
$\langle M\rangle$ or the  quadratic variation $[M]$ for $M$, where the
latter two quantities will be defined below for increment
martingales. These conditions rely on a convenient decomposition of
increment martingales, and  are particularly simple when $M$ is
continuous.  We define two
kinds of integrals with respect to  $M$; the first of these  is an
increment integral $\phi\inbull M$, which we can think of as process
satisfying  $\phi\inbull M_t -\phi\inbull M_s
=\int_{(s,t]}\phi_u\, \dd M_u$; i.e.\ increments in  $\phi\inbull M$
correspond to integrals over finite intervals. The second integral,
$\phi\bullet M$, is a  usual stochastic integral with respect to $M$
which we can think of as an integral from $-\infty$. The integral
$\phi\bullet M$ exists if and only if the increment integral
$\phi\inbull M$ has an a.s.\  limit, $\phi\inbull M_{-\infty}$,  at
$-\infty$ and   
$\phi\inbull M - \phi\inbull M_{-\infty}$ is a local
martingale. Thus,  $\phi\inbull M_{-\infty}$ may exists without
$\phi \bullet M$ being defined and in this case we may think of
$\phi\inbull M_{-\infty}$ as an improper integral.  In special cases we give necessary
and sufficient conditions for  $\phi\inbull M_{-\infty}$ to exist.

The present paper relies only on standard
martingale results and martingale integration  as  developed in many
textbooks, see  e.g.\ 
Jacod and Shiryaev (2003) and Jacod (1979). While we focus primarily
on the behaviour at $-\infty$, it is also of interest to consider the
behaviour at $\infty$; we refer to Cherny and Shiryaev (2005), and
references therein,  for a
study of this case for semimartingales,  and to Sato (2006),  and
references therein,   for a
study of  improper integrals with respect to   L\'evy processes when the
integrand is deterministic. Finally, we note that having studied increment
martingales, it is natural to  introduce and study a concept called
\textit{increment semimartingales}; this will be  included in a
forthcoming paper by the authors; see  Basse-O'Connor et al.\ (2010). 

\subsection{Relations to other martingale-type processes} \label{dtr}
Let us briefly discuss how to define processes with some kind of 
martingale structure when  processes are indexed by   $\R$.
 There are at least three natural definitions:
 \begin{enumerate}[(i)]
 \item  \label{def_a}$E[M_t|\mcalF_s^M]=M_s$ for all
 $s\leq t$, where $\mcalF^M_s=\sigma(M_u:u\in (-\infty,s])$.
  \item \label{def_b} $E[M_t-M_u|\mcalF_{v,s}^{\mathcal{I}M}]=M_s-M_v$ for all
    $u\leq v\leq s\leq  t$, where
 $\mcalF^{\mathcal{I}M}_{v,s}=\sigma(M_t-M_u:v\leq u\leq t\leq s)$.
 \item \label{def_c} $E[M_t-M_s|\mcalF^{\mathcal{I}M}_s]=0$ for all $s\leq t$, where
 $\mcalF^{\mathcal{I}M}_s=\sigma(M_t-M_u:u\leq t\leq s)$.
 \end{enumerate}
(The first definition is the  usual martingale definition  and the third
one  corresponds to  increment martingales in the filtration
$(\mathcal{F}^{\mathcal{I}M}_t)_{t\in \R}$). 
 Both   \eqref{def_a} and \eqref{def_c}
 generalise the usual notion of martingales  indexed by 
 $\R_+$,  in
 the sense that if  $(M_t)_{t\in \R}$ is  a process with $M_t=0$ for $t\in
 (-\infty,0]$, then  $(M_t)_{t\geq 0}$ is a martingale (in the usually
 sense)  if and only if $(M_t)_{t\in\R}$ is a martingale in the sense of
 \eqref{def_a}, or equivalently in the sense of \eqref{def_c}. 
 Definition \eqref{def_b} does not generalise martingales indexed by  
 $\R_+$ in this manner. Note moreover that  a  centered  L\'evy process
indexed by    $\R$ (cf.\ Example \ref{levdef})  is a
 martingale in the sense of \eqref{def_b} and \eqref{def_c} but not in
 the sense of \eqref{def_a}. Thus, \eqref{def_c} is the only one of the
 above definitions
 which generalise the usual notion of martingales on $\R_+$
 and is  general enough to allow centered L\'evy processes to be
 martingales. Note also that both
 \eqref{def_a} and \eqref{def_b} imply \eqref{def_c}.

The general theory of martingales indexed by partially  ordered sets (for
 short, posets) does not seem to give us much  insight about increment
 martingales since the  research in this field mainly has  a  different focus;
 indeed,  one of the main problems has been to study martingales
 $M=(M_t)_{t\in I}$ in the case where $I=[0,1]^2$; see e.g.\ Cairoli
 and Walsh (1975,1977).   However,
  below we  recall some of the basic definitions and relate them to the above
 \eqref{def_a}--\eqref{def_c}.

Consider  a poset $(I,\leq)$ and a filtration
$\mcalF=(\mcalF_t)_{t\in I}$,  that
 is, for all  $s,t\in I$ with $s\leq t$ we have that
 $\mcalF_s\subseteq \mcalF_t$.   Then, $(M_t)_{t\in I}$ is called a martingale
 with respect to $\leq$ and $\mcalF$,  if for all $s,t\in I$ with
 $s\leq t$ we have  that
 $E[M_t|\mcalF_s]=M_s$.
 Let $M=(M_t)_{t\in \R}$ denote a stochastic process.
 Then, definition \eqref{def_a}  corresponds to
 $I=\R$ with the usually order. To cover \eqref{def_b} and
 \eqref{def_c} let  $I=\{(a_1,a_2]:a_1,a_2,\in\R,\
 a_1<a_2\}$, and for $A=(a_1,a_2]\in I$ let  $M_A=M_{a_2}-M_{a_1}$,
 $\mcalF_A^M=\sigma(M_B:B\in I,\ B\subseteq A)$.
 Furthermore, for all  $A=(a_1,a_2],B=(b_1,b_2]\in I$ we will
 write $A\leq_2\! B$ if
 $A\subseteq B$, and $A\leq_3\! B$  if
 $a_1=b_1$ and $a_2\leq b_2$. Clearly, $\leq_2$ and $\leq_3$
 are two partial orders on $I$. Moreover, it is easily seen
 that  $(M_t)_{t\in \R}$ satisfies  \eqref{def_b}/\eqref{def_c} if and only
 if $(M_A)_{A\in  I}$  is a martingale with respect to
 $\leq_2$/$\leq_3$ and $\mcalF^M$. Recall that a poset $(I,\leq)$ is called directed  if
 for all $s,t\in I$ there exists an element $u\in I$ such that $s\leq
 u$ and $t\leq u$. Note that $(I,\leq_2)$ is directed, but
 $(I,\leq_3)$ is not; and in particular $(I,\leq_3)$ is not a lattice.
 We refer to Kurtz (1980)  for some nice considerations about
 martingales indexed by directed posets.

\section{Preliminaries}\label{sect2}

Let $(\Omega,\mcalF, P)$ denote a complete
probability space on which all random variables appearing in the
following  are defined. 
Let $\mcalF_{\bdot}= (\mcalF_t)_{t\in \R}$ denote a filtration in
$\mcalF$, i.e.\  a right-continuous  increasing family of
sub $\sigma$-algebras in $\mcalF$ satisfying  $\mathcal{N}\subseteq \mcalF_t$ for all
$t$, where $\mathcal{N}$ is the collection of all $P$-null sets. Set  $\mcalF_{-\infty}:= \cap_{t\in \R} \mcalF_t$   and
$\mcalF_\infty:=  \cup_{t\in \R} \mcalF_t$.   
 The notation $\ilaw $ will be used to denote identity in
distribution. Similarly, $\Peq$ will denote equality up to
$P$-indistinguishability of stochastic processes. 
When $X=(X_t)_{t\in
  \R}$ is a real-valued    stochastic process we say that $\lim_{s \to
  -\infty}X_s $ exists $P$-a.s.\ if $X_s$ converges almost surely as $s\to
-\infty$,  to a finite limit.  

\begin{definition} 
 A  \textit{stopping time with respect to $\mcalF_{\bdot}$}  is a
 mapping   $\sigma:\Omega \to (-\infty, \infty]$ satisfying 
  $\{\sigma\leq t\}\in \mcalF_t$ for all $t\in \R$. (When there is no
  risk of confusion, we  often omit  terms like  "with respect
to  $\mcalF_{\bdot}$".) A \textit{localizing
  sequence}  $(\sigma_n)_{n\geq 1}$ is a sequence of stopping times
   satisfying $\sigma_1(\omega)\leq \sigma_2(\omega)\leq \cdots $ for
  all $\omega$, and $\sigma_n \to \infty$   $P$-a.s. 

  Let $\mathcal{P}(\mcalF_{\bdot})$ denote the
 \textit{predictable $\sigma$-algebra} on $\R\times \Omega$. 
  That is,  the $\sigma$-algebra generated by the set of \textit{simple
  predictable sets}, where  a subset of $\R\times \Omega$ is said
  to be simple predictable if it is of the form  $B\times C$
  where, for some $t\in \R$,   $C$ is in $\mcalF_t$  and $B$ is a bounded
  Borel set in $]t, \infty[$.  Note  that the set of simple
  predictable sets is closed under finite intersections.
\end{definition}

 Any left-continuous and adapted
 process is predictable. Moreover, the set of predictable processes is
 stable under stopping in the sense that whenever
 $\alpha=(\alpha_t)_{t\in \R}$ is
 predictable and $\sigma$ is a stopping time, the stopped process
 $\alpha^\sigma:= (\alpha_{t\wedge  \sigma})_{t\in \R}$ is also  predictable.

By an \textit{increasing process} we mean a   process $V=(V_t)_{t\in
  \R}$ (not necessarily  adapted)  for which   
$t\mapsto V_t(\omega)$ is nondecreasing for all $\omega\in
\Omega$. Similarly, a  process 
$V$   is said to be \ca\ if $t\mapsto V_t(\omega)$ is
right-continuous and has left limits in $\R$ for all $\omega\in \Omega$.   

In what follows  increments of  processes  play an important   role.  
Whenever $X=(X_t)_{t\in \R}$ is a process and $s,t\in \R$ define the
\textit{increment of  $X$ over the interval $(s,t]$}, to be denoted
$\Ds X_t$,  as 
\begin{equation}
\Ds X_t:= X_t - X_{t\wedge s} = \begin{cases}
0 &\mbox{if }t\leq s\\
X_t - X_s&\mbox{if } t\geq s.
\end{cases}
\end{equation} 
Set furthermore  $\Ds X= (\Ds X_t)_{t\in \R}$. 
Note  that 
\begin{equation}\label{go}
(\Ds X)^\sigma= \Ds (X^\sigma)\quad \mbox{for } s\in \R \mbox{ and }
\sigma\mbox{ a stopping time}.  
\end{equation}
Moreover, for $s\leq t\leq u$ we have
\begin{equation}\label{kl}
\Dt(\Ds X)_u = \Dt  X_u.
\end{equation}

\begin{definition} 
 Let $\mathcal{A}(\mcalF_{\bdot})$ denote the class of increasing
  adapted \ca\  processes.

 Let $\mathcal{A}^1(\mcalF_{\bdot})$ denote the subclass of
  $\mathcal{A}(\mcalF_{\bdot})$ consisting of integrable increasing
  \ca\  adapted
  processes;
  $\mathcal{LA}^1(\mathcal{F}_{\bdot})$ denotes  the subclass of
  $\mathcal{A}(\mcalF_{\bdot})$ consisting of \ca\ increasing  adapted
  processes $V=(V_t)_{t\in \R}$ for which there
  exists a localizing sequence $(\sigma_n)_{n\geq 1}$ such that
  $V^{\sigma_n}\in \mathcal{A}^1(\mcalF_{\bdot})$ for all~$n$. 

  Let $\mathcal{A}_0(\mcalF_{\bdot})$ denote the subclass of
  $\mathcal{A}(\mcalF_{\bdot})$ consisting of increasing \ca\ adapted
  processes $V=(V_t)_{t\in \R}$ for which $\lim_{t\to -\infty} V_t=0$
  $P$-a.s. Set $\mathcal{A}_0^1(\mcalF_{\bdot}) :=
  \mathcal{A}_0(\mcalF_{\bdot}) \cap\nobreak \mathcal{A}^1(\mcalF_{\bdot})$
  and $\mathcal{LA}_0^1(\mcalF_{\bdot}):=
  \mathcal{A}_0(\mcalF_{\bdot}) \cap \mathcal{LA}^1(\mcalF_{\bdot})$.

 Let $\mathcal{IA}(\mcalF_{\bdot})$ (resp.\
  $\mathcal{IA}^1(\mcalF_{\bdot})$,  $\mathcal{ILA}^1(\mcalF_{\bdot})$)  
  denote the class of \ca\ increasing processes
  $V$ for which $\Dsu V\in \mathcal{A}(\mcalF_{\bdot})$ 
 (resp.\ $\Dsu  V\in \mathcal{A}^1(\mcalF_{\bdot})$, $\Dsu  V\in
 \mathcal{LA}^1(\mcalF_{\bdot})$) for all $s\in \R$. We emphasize that  $V$
 is not assumed  adapted.  
\end{definition}

Motivated by our interest in increments  we say that 
two \ca\ processes 
$X=(X_t)_{t\in \R}$ and $Y=(Y_t)_{t\in \R}$    have \textit{identical increments},
and write $X\inl Y$, if $\Ds X  \Peq  \Dsu  Y$ 
for all $s\in \R$. In this case also  $X^\sigma\inl Y^\sigma$
whenever $\sigma$ is a stopping time. 

\begin{remark}\label{ff} Assume $X$ and $Y$ are \ca\ processes
  with $X\inl Y$. Then  by definition $X_t-X_s = Y_t- Y_s$ for all
  $s\leq t$ $P$-a.s.\ for all $t$  and so by the \ca\ property $X_t-X_s = Y_t- Y_s$ for all
  $s, t\in \R$ $P$-a.s. This shows that there exists a random variable
  $Z$ such that $X_t = Y_t +Z$ for all $t\in \R$ $P$-a.s., and thus
  $\Ds X_t = \Dsu  Y_t $ for all $s,t\in \R$ $P$-a.s.

\end{remark}

For any stochastic process  $X=(X_t)_{t\in \R}$  we have 
\begin{equation}\label{poi}
\Ds X_t + \Dt X_u = \Ds X_u \quad \mbox{for } s\leq
t\leq u. 
\end{equation}
This leads us to consider increment processes,  defined as
follows.  
Let $I=\{\Ds I\}_{s\in \R}$ with $\Ds I=(\Ds I_{t})_{t\in \R}$ be a family of
  stochastic processes. We say that $I$ is \textit{a
  consistent family
  of increment processes} if the following three conditions are
satisfied:  
\begin{enumerate}
\item[(1)] $\Ds I $ is an adapted process for all $s\in \R$, and $\Ds I_{t}=0$
  $P$-a.s.\ for all $t\leq s$. 
\item[(2)] For all $s\in \R$ and $\omega\in \Omega$  the mapping
  $t\mapsto \Ds I_{t}(\omega)$ is \ca.
\item[(3)]  For all $s\leq t \leq u$ we have $\Ds I_{t} + \Dt I_{u} =
  \Ds I_{u}$ 
  $P$-a.s. 
\end{enumerate}
 Whenever  $X$ is a \ca\ process such that  $\Ds X$ is
adapted for all $s\in \R$,   the family  
  $\{\Ds X\}_{s\in \R}$ of increment processes is then  consistent by equation
  \eqref{poi}. Conversely, 
let $I$ be a consistent family of increment processes.  A \ca\ 
process  $X=(X_t)_{t\in \R}$ is said to be \textit{associated with}
$I$  if $\Ds X\Peq  \Ds I$  for all $s\in \R$. It is easily seen that
there exists such a process; for example, let
\begin{equation*}
X_t = \begin{cases}
{{}^0\!}I_t & \mbox{for } t\geq 0 \\
 - \Dt I_0 &\mbox{for } t=-1, -2, \ldots, \\
X_{-n} +  {{}^{-n}\!}I_t  &\mbox{for } t\in (-n,-n+1) \mbox{ and } n=1,2,
\ldots
\end{cases} 
\end{equation*}
  Thus, consistent families of increment
processes correspond to increments in \ca\ processes with adapted
increments. If $X=(X_t)_{t\in \R}$ and $Y=(Y_t)_{t\in \R}$ are
\ca\ processes 
associated with $I$ then $X\inl Y$   and hence by Remark \ref{ff}
there is a random variable $Z$ such that $X_t = Y_t +Z$ for all $t$
$P$-a.s. 
\begin{remark}\label{nygo}  Let $I$ be a consistent family of increment
  processes, and assume     $X$ is a \ca\ process associated
  with $I$  such that 
$X_{-\infty}:=  \lim_{t\to -\infty}X_t$ exists in probability.  Then, 
$(X_t-X_{-\infty})_{t\in \R}$ is adapted  and associated with
  $I$. Indeed, $X_t - X_{-\infty}= \lim_{s \to -\infty} \Ds X_t$ in
  probability for $t\in \R$  and
  since $\Ds X_t= \Ds I_t$ ($P$-a.s.)   is $\mcalF_t$-measurable,
  it follows that
  $X_t- X_{-\infty}$ is $\mcalF_t$-measurable. In this case, 
  $(X_t-X_{-\infty})_{t\in \R}$  is the unique (up to
  $P$-indistinguishability) \ca\ process associated with $I$ which
  converges to $0$ in probability as time goes to $-\infty$. If, in 
  addition, $\Ds I$ is predictable for all $s\in \R$  then
  $(X_t-X_{-\infty})_{t\in \R}$  is also  predictable. To see this,  choose a
  $P$-null set $N$ and a sequence $(s_n)_{n\geq 1}$ decreasing to
  $-\infty$ such that $X_{s_n}(\omega)\to X_{-\infty}(\omega)$ as
  $n\to \infty$ for all $\omega\in N^c$. For $\omega\in N^c$
  and  $t\in \R$ we then have $X_t(\omega)- X_{-\infty}(\omega)=  \lim_{n
    \to \infty} {{}^{s_n}}X_{t}(\omega)$, implying the result due to
  inheritance of predictability  under pointwise limits.

\end{remark}

\section{Martingales and increment martingales} \label{bnm} 

Let us now introduce the classes   of (square integrable)  martingales and
the corresponding localized classes. 

\begin{definition} 
Let  $M=(M_t)_{t\in \R}$ denote a \ca\ adapted
process.  

We call   $M$
 an \textit{$\mcalF_{\bdot}$-martingale} if it  is
integrable and  
for all $s<t$,  $E[M_t|\mcalF_s]
=M_s$ $P$-a.s. If in addition $M_t$ is square integrable for all
$t\in \R$ then $M$ is called a \textit{ square integrable martingale}. 
Let $\mcalM(\mcalF_{\bdot})$ resp.\
$\mcalM^2(\mcalF_{\bdot})$ denote the class
of $\mcalF_{\bdot}$-martingales resp.\ square
integrable $\mcalF_{\bdot}$-martingales.  Note  that these
classes are both stable under stopping. 

We call  $M$  a \textit{local $\mcalF_{\bdot}$-martingale}  if there exists a
localizing sequence $(\sigma_n)_{n\geq 1}$ such that $M^{\sigma_n}\in
\mcalM(\mcalF_{\bdot})$ for all $n$. The definition of a
\textit{locally square integrable  martingale} is similar.  Let $\mathcal{L}\mcalM(\mcalF_{\bdot})$ resp.\
$\mathcal{L}\mcalM^2(\mcalF_{\bdot})$ denote the class of
{local martingales} resp.\ {locally square integrable
martingales}.  These classes  are  stable under
stopping. 

\end{definition}

\begin{remark}\label{-in}

(1) The  backward martingale convergence theorem shows that
if    $M\in \mcalM(\mcalF_{\bdot})$ then $M_t$
converges $P$-a.s.\ and in $L^1(P)$ to an $\mcalF_{-\infty}$-measurable
integrable random variable  $M_{-\infty}$   as $t\to -\infty$
(cf.\  Doob~(1990, Chapter~II, Theorem~2.3)).
In this case we may consider $(M_t)_{t\in [-\infty, \infty)}$ as a
martingale with respect to the filtration $(\mcalF_t)_{t\in
  [-\infty,\infty)}$.  If  $M\in \mcalM^2(\mcalF_{\bdot})$ then $M_t$
converges in $L^2(P)$ to $M_{-\infty}$.

(2) Let $M\in \mathcal{L}\mcalM(\mcalF_{\bdot})$ and choose a
localizing sequence $(\sigma_n)_{n\geq 1}$ such that $M^{\sigma_n}\in
\mcalM(\mcalF_{\bdot})$ for all $n$. From (1), it  follows that there
exists an $\mcalF_{-\infty}$-measurable integrable random variable
$M_{-\infty}$ (which does not depend on $n$) such that for all $n$ we
have $M_t^{\sigma_n} \to M_{-\infty} $ $P$-a.s.\ and in $L^1(P)$ as
$t\to -\infty$, and $M_t\to M_{-\infty}$ $P$-a.s. Thus, defining
$M^{\sigma_n}_{-\infty}:= M_{-\infty}$ it follows that for all $n$ the
process $\smash{(M_t)_{t\in [-\infty, \infty)}^{\sigma_n}}$ can be considered
a martingale with respect to $(\mcalF_t)_{t\in [-\infty,\infty)}$, and
consequently $\smash{(M_t)_{t\in [-\infty, \infty)}}$ is a local martingale.
(Note, though, that $\sigma_n$ is not allowed to take on the
value~$-\infty$.) In the case $M\in
\mathcal{L}\mcalM^2(\mcalF_{\bdot})$ assume $(\sigma_n)_{n\geq 1}$ is
chosen such that $ M^{\sigma_n}\in \mcalM^2(\mcalF_{\bdot})$ for all
$n$; then $M_t^{\sigma_n} \to M_{-\infty} $ in $L^2(P)$.

(3) The preceding shows that a local martingale indexed by $\R$ can
be extended to a  local martingale indexed by $[-\infty, \infty)$,
where localizing stopping times, however, are not allowed to take on
the value $-\infty$. Let us argue that the latter restriction is of
minor importance. Thus, call $\sigma: \Omega \to [-\infty, \infty]$ an
\textit{$\bar{\R}$-valued stopping time with respect to
  $\mathcal{F}_{\bdot}$} if $\{\sigma \leq t\} \in
\mcalF_t$ for all $t\in [-\infty, \infty)$, and call a sequence of
nondecreasing $\bar{\R}$-valued stopping times $\sigma_1\leq \sigma_2
\leq \cdots$ an \textit{$\bar{\R}$-valued localizing sequence} if
$\sigma_n \to \infty$ $P$-a.s.\ as $n\to \infty$. 

 Then we claim that a
\ca\ adapted process $M=(M_t)_{t\in \R}$ is a local martingale if and
only if $M_{-\infty} := \lim_{s\to -\infty} M_s$ exists $P$-a.s\ and
there is an $\bar{\R}$-valued localizing sequence $(\sigma_n)_{n\geq
  1}$ such that $(M_{t}^{\sigma_n})_{t\in [-\infty, \infty)}$ is a
martingale. We emphasize that the latter characterisation is the most
natural one when considering the index set $[-\infty, \infty)$, while
the former is better when considering $\R$. Note that the \textit{only
  if} part follows from (2). Conversely, assume $M_{-\infty} :=
\lim_{s\to -\infty} M_s$ exists $P$-a.s\ and let $(\sigma_n)_{n\geq
  1}$ be an $\bar{\R}$-valued localizing sequence such that
$(M_{t}^{\sigma_n})_{t\in [-\infty, \infty)}$ is a martingale, and let
us prove the existence of a localizing sequence $(\tau_n)_{n\geq 1}$
such that $M^{\tau_n}$ is a martingale for all $n$. Since
$M_{-\infty}$ is integrable it suffices to consider $M_t- M_{-\infty}$
instead of $M_t$; consequently we may and do assume $M_{-\infty}=0$.
In this case, $(\tau_n)_{n\geq 1} =(\tau \vee \sigma_n)_{n\geq 1}$
will do if $\tau$ is a stopping time such that $M^\tau$ is a
martingale. To construct this $\tau$ set $Z_t^n= E[|M_t^{\sigma_n} | |
\mcalF_{-\infty}]$ for $t\in [-\infty, \infty)$. Then $Z^n$ is
$\mcalF_{-\infty}$-measurable and can be chosen non-decreasing, \ca\
and $0$ at $-\infty$. Therefore
\begin{equation*}
\rho_n = \inf\{t\in \R: Z_{\frac{t}{2}}^n >1\} \wedge  0 
\end{equation*}
is real-valued, $\mcalF_{-\infty}$-measurable and $Z_{\rho_n}^n \leq
1$. Define
\begin{equation*}
\tau= \rho_n \wedge \sigma_n\  \mbox{on }  A_n = \{\sigma_1= \dots=
\sigma_{n-1} =- \infty  \mbox{ and } \sigma_n>-\infty \} 
\end{equation*}
and set  $\tau= 0$ on $(\cup_{n\geq 1} A_n)^c$. Then $\tau$ is a stopping
time since the $A_n$'s are disjoint and
$\mcalF_{-\infty}$-measurable. Furthermore,  $\cup_{n\geq 1} A_n =
\Omega$ $P$-a.s. Thus, for all $t> -\infty$,   
\begin{equation*}
E[|M_{t\wedge \tau}|] = \sum_{n=1}^\infty E[ |M_{\sigma_n \wedge \rho_n
  \wedge t} | 1_{A_n} ] = \sum_{n=1}^\infty E[ |Z^n_{\rho_n \wedge
  \sigma_n \wedge t}  | 1_{A_n} ] \leq 1,  
\end{equation*}
implying 
\begin{equation*}
E[ M_{\tau\wedge t} | \mathcal{F}_s] = \sum_{n=1}^\infty E[M_{\sigma_n
  \wedge \rho_n \wedge t} | \mathcal{F}_s]1_{A_n} = \sum_{n=1}^\infty
M_{\sigma_n \wedge \tau_n \wedge s} 1_{A_n} = M_{\tau\wedge s}
\end{equation*}
for all $-\infty < s<t$; thus,  $M^\tau$ is a martingale.

\end{remark}

\begin{example}\label{levdef} A \ca\ process $X=(X_t)_{t\in \R}$ is
  called a
  \textit{L\'evy process indexed by  $\R$} if it has stationary
  independent increments; that is,  whenever $n\geq 1$ and $t_0
  < t_1 <\dots <t_n$,  the increments
  ${{}^{t_0}\!}X_{t_1},{{}^{t_1}\!}X_{t_2},   \ldots,
  {{}^{t_{n-1}}\!}X_{t_n}$ are independent and  $\Ds X_t  \ilaw
  {{}^u\!}X_v$        whenever $s<t$ and  $u <v$
  satisfy $t-s = v-u$. In this case
  $(\Ds X_{s+t})_{t\geq 0}$ is an ordinary  L\'evy process indexed by 
  $\R_+$ for all $s\in \R$. 

Let  $X$ be  a L\'evy process indexed by  $\R$. There  is a unique 
infinitely divisible distribution $\mu$ on $\R$ associated with $X$
in the sense  that  for all
$s<t$, $\Ds X_t \ilaw \mu^{t-s}$, where,  for $u\geq 0$,  $\mu^u$ is the
probability measure with characteristic function $z\mapsto
\widehat\mu(z)^u$. (As always, $\widehat \mu$ denotes the
characteristic function of $\mu$).   When $\mu=N(0,1)$, the standard
normal  distribution, $X$ is called a (standard) Brownian motion
indexed by  $\R$. 
If $Y$ is a \ca\ process with $X\inl Y$, it  is a
L\'evy process as well and $\mu$ is also associated with $Y$; that is,    
L\'evy processes indexed by  $\R$ are  determined by the infinitely
divisible $\mu$  only up to  addition of a random variable.

Note that $(X_{(-t)-})_{t\in \R}$ (where, for $s\in \R$, $X_{s-}$
denotes the left limit at $s$) is again a L\'evy process
indexed by  $\R$  and
the distribution associated with it is $\mu^-$ given  by $\mu^-(B):=
\mu(-B)$ for $B\in \mathcal{B}(\R)$.
Since this process  appears by time reversion of $X$,  the behaviour of $X$ at
$-\infty$  corresponds to the behaviour of   $(X_{(-t)-})_{t\in \R}$ 
 at $\infty$, which is   well understood,  cf.\ e.g.\ Sato (1999,
 Proposition 37.10); in
 particular,   $\lim_{s\to -\infty}X_s$
 does not exist in $\R$ (in any reasonable sense) except when $X$ is
 constant.  Thus, except in  nontrivial cases  $X$ is not a local martingale
 in any filtration.

\end{example}    
This example clearly indicates that we need to generalise the concept
of a martingale.

\begin{definition}Let $M=(M_t)_{t\in \R}$ denote a \ca\ process,   in
  general  not assumed    adapted.

 We say that $M$ is an 
  \textit{increment martingale with respect to $\mathcal{F}_{\bdot}$} if for all $s\in \R$,  $\Ds M\in \mcalM(\mcalF_{\bdot})$. This is equivalent to
 saying that for all $s<t$, $\Ds M_t$ is $\mcalF_t$-measurable,
 integrable and satisfies $E[\Ds M_t  |\mcalF_s]=0$ $P$-a.s. If in
 addition all increments are square integrable, then $M$ is called  a
 \textit{increment square integrable martingale}. Let
 $\mathcal{IM}(\mcalF_{\bdot} )$ and
 $\mathcal{IM}^2(\mcalF_{\bdot})$ denote the corresponding
 classes. 

  $M$
is called an \textit{increment local martingale
    } if for all $s$,  $\Ds M $ is an adapted
  process and  there exists a localizing sequence $(\sigma_n)_{n\geq
    1}$ (which may depend on $s$) such that  $(\Ds M)^{\sigma_n}\in
  \mcalM(\mcalF_{\bdot})$ for all $n$. Define an 
  \textit{increment locally square integrable  martingale
   } in the obvious way. Denote the corresponding
  classes by $\mathcal{ILM}(\mcalF_{\bdot})$ and
 $\mathcal{ILM}^2(\mcalF_{\bdot})$.

\end{definition}
Obviously the four classes of increment processes are $\inl\,$-stable
and by  \eqref{go} stable under stopping. Moreover, 
$\mathcal{M}(\mcalF_{\bdot}) \subseteq \mathcal{IM}(\mcalF_{\bdot})$ and
$\mathcal{M}^2(\mcalF_{\bdot}) \subseteq \mathcal{IM}^2(\mcalF_{\bdot})$
with the following characterizations 
\begin{align} \label{k1}
\mathcal{M}(\mcalF_{\bdot})&=\{M=(M_t)_{t\in \R}\in \mathcal{IM}(\mcalF_{\bdot})
: M\ \text{is adapted and integrable}\}\\
\mathcal{M}^2(\mcalF_{\bdot})&=\{M\in \mathcal{IM}^2(\mcalF_{\bdot})
: M\ \text{is adapted and square integrable}\}. \label{k2}
\end{align}
Likewise,  
$\mathcal{LM}(\mcalF_{\bdot}) \subseteq \mathcal{ILM}(\mcalF_{\bdot})$
and  $\mathcal{LM}^2(\mcalF_{\bdot}) \subseteq \mathcal{ILM}^2(\mcalF_{\bdot})$. 
But no similar simple characterizations  as in \eqref{k1}--\eqref{k2}
of the localized classes   seem to be valid. Note 
that $\mathcal{LIM}(\mcalF_{\bdot})\subseteq \mathcal{ILM}(\mcalF_{\bdot})$, where
the former is the set of \textit{local increment martingales}, i.e.\   the
localizing  sequence can be chosen
independent of $s$. A similar statement holds for 
$\mathcal{ILM}^2(\mcalF_{\bdot})$.

When $\tau$ is a stopping time, we define $ {}^\tau\! M$ in the
obvious way as $ {}^\tau\! M_t= M_t -  M_{t\wedge \tau}$ for $t\in \R$. 
\begin{proposition}
  Let $M=(M_t)_{t\in \R}\in \mathcal{IM(F_{\bdot})}$ and $\tau$ be a stopping
  time with respect to $\mathcal{F}_{\bdot}$.  Then   ${}^\tau\! M\in \mathcal{M(F_{\bdot})}$ if
  $  \left\{M_0-M_{\tau\vee (-n)\wedge 0}:n\geq 1\right\}$ is
  uniformly integrable.
\end{proposition}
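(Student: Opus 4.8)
The plan is to truncate $\tau$ from below, reduce to genuine martingales by optional stopping, and then pass to the limit using the uniform integrability hypothesis.

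First I would set $\sigma_n := \tau \vee (-n)$ for $n\geq 1$. Each $\sigma_n$ is a stopping time, since $\{\sigma_n\leq t\}=\{\tau\leq t\}\cap\{-n\leq t\}\in\mcalF_t$, and it satisfies $\sigma_n\geq -n$. The purpose of this truncation is that $N^{(n)}:={}^{-n}\!M$ is a genuine martingale in $\mcalM(\mcalF_{\bdot})$ by the definition of an increment martingale, and a direct computation using $\sigma_n\geq -n$ (so that $(t\wedge\sigma_n)\wedge(-n)=t\wedge(-n)$) yields the identity
\[
 {}^{\sigma_n}\!M = N^{(n)} - (N^{(n)})^{\sigma_n}.
\]
Since $\mcalM(\mcalF_{\bdot})$ is stable under stopping, $(N^{(n)})^{\sigma_n}\in\mcalM(\mcalF_{\bdot})$, and hence ${}^{\sigma_n}\!M\in\mcalM(\mcalF_{\bdot})$ as a difference of martingales. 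Because $\tau>-\infty$, for each $\omega$ we have $\sigma_n(\omega)=\tau(\omega)$ for all large $n$, so ${}^{\sigma_n}\!M_t\to{}^\tau\!M_t$ pointwise, indeed with eventual equality. As ${}^\tau\!M$ is \ca\ (the map $t\mapsto t\wedge\tau$ is continuous nondecreasing and $M$ is \ca) and is the a.s.\ limit of the adapted processes ${}^{\sigma_n}\!M$, completeness of the filtration gives that ${}^\tau\!M$ is adapted.

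It then remains to upgrade a.s.\ convergence to $L^1$-convergence for each fixed $t$, which is where the hypothesis enters and which is the main step. This amounts to showing that $\{{}^{\sigma_n}\!M_t:n\geq 1\}$ is uniformly integrable for every $t$. At $t=0$ this is exactly the assumption, since ${}^{\sigma_n}\!M_0=M_0-M_{\tau\vee(-n)\wedge 0}$. For $t<0$, the martingale property of ${}^{\sigma_n}\!M$ gives the representation ${}^{\sigma_n}\!M_t=E[{}^{\sigma_n}\!M_0\mid\mcalF_t]$, and uniform integrability is preserved under conditional expectation, so the family is uniformly integrable there as well. For $t>0$ I would compute the forward increment ${}^{\sigma_n}\!M_t-{}^{\sigma_n}\!M_0$ and observe that it does \emph{not} depend on $n$: a short case analysis according to the sign of $\tau$ shows that the term $M_{\sigma_n}$ cancels, leaving ${}^0\!M_t$ on $\{\tau\leq 0\}$ and $M_t-M_{t\wedge\tau}$ on $\{\tau>0\}$, a single integrable random variable. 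Hence $\{{}^{\sigma_n}\!M_t\}_n$ is the sum of the uniformly integrable family $\{{}^{\sigma_n}\!M_0\}_n$ and one fixed integrable variable, so it too is uniformly integrable.

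With uniform integrability at every $t$ in hand, a.s.\ convergence upgrades to $L^1$-convergence ${}^{\sigma_n}\!M_t\to{}^\tau\!M_t$. Passing to the limit in $E[{}^{\sigma_n}\!M_t\mid\mcalF_s]={}^{\sigma_n}\!M_s$, using $L^1$-continuity of conditional expectation, yields $E[{}^\tau\!M_t\mid\mcalF_s]={}^\tau\!M_s$ for $s<t$, while integrability of ${}^\tau\!M_t$ follows from the $L^1$-convergence; together with the adaptedness and \ca\ property noted above this gives ${}^\tau\!M\in\mcalM(\mcalF_{\bdot})$. The main obstacle is exactly the propagation of uniform integrability from the single time $t=0$, where it is assumed, to all $t$: the backward direction $t<0$ is immediate from the conditional-expectation representation, whereas the forward direction $t>0$ rests on the slightly less obvious observation that the increment of ${}^{\sigma_n}\!M$ over $(0,t]$ is independent of $n$.
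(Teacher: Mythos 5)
Your proof is correct, and its overall architecture coincides with the paper's: truncate $\tau$ from below as $\sigma_n=\tau\vee(-n)$, show that each ${}^{\sigma_n}\!M$ is a genuine martingale (your identity ${}^{\sigma_n}\!M={}^{-n}\!M-({}^{-n}\!M)^{\sigma_n}$ is precisely the paper's argument for stopping times bounded from below), establish uniform integrability of $\{{}^{\sigma_n}\!M_t:n\geq 1\}$ for every fixed $t$, and conclude that ${}^{\tau}\!M$ is an $L^1(P)$-limit of martingales and hence a martingale. Where you genuinely differ is in the middle step, the propagation of uniform integrability from $t=0$ to all $t$. The paper uses the single decomposition ${}^{\sigma_n}\!M_t=(M_t-M_0)+(M_{\sigma_n\wedge 0}-M_{\sigma_n\wedge t})+{}^{\sigma_n}\!M_0$, valid for all $t$ simultaneously, and handles the middle family by optional sampling: those variables arise by stopping a martingale at bounded stopping times, so they are conditional expectations of a fixed integrable variable and therefore uniformly integrable. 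You instead split according to the sign of $t$: for $t<0$ you use that conditional expectations preserve uniform integrability, via ${}^{\sigma_n}\!M_t=E[{}^{\sigma_n}\!M_0\mid\mcalF_t]$ (legitimate, since the martingale property of ${}^{\sigma_n}\!M$ has already been established), and for $t>0$ you observe that ${}^{\sigma_n}\!M_t-{}^{\sigma_n}\!M_0$ --- which is exactly the sum of the paper's first two terms --- does not depend on $n$, so no optional-sampling argument is needed at all. Both routes are sound; yours is slightly more elementary in that it bypasses the optional-sampling uniform-integrability lemma, at the cost of a case analysis in $t$, while the paper's three-term decomposition treats all $t$ uniformly.
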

If  $\tau$ is bounded from below then the above set is always uniformly
integrable.
\begin{proof}
 Assume first that $\tau$ is bounded from below, that is,  there
 exists  an 
 $s_0\in (-\infty,0)$ such that $\tau\geq s_0$. Then, since  $ ({}^\tau\!
M_t)_{t\in
   \R}=({}^{s_0}\!M_t-{}^{s_0}\! M_{\tau\wedge t})_{t\in\R}$,  ${}^\tau\!
M$ is a sum of two martingales and hence a martingale. Assume now that
$  \left\{M_0-M_{\tau\vee (-n)\wedge 0}:n\geq 1\right\}$ is
  uniformly integrable. Then, with  $\tau_n=\tau\vee (-n)$ we have for
  $t\in \R$
$ {}^{\tau_n}\! M_t = (M_t- M_0) + (M_{\tau_n \wedge 0} - M_{\tau_n
  \wedge t}) + {}^{\tau_n}\! M_0$. The first term on the right-hand
side is integrable since  $M\in
\mathcal{IM(F_{\bdot})}$. Moreover, $ \{M_{\tau_n \wedge 0} - M_{\tau_n
  \wedge t} : n\geq 1\}$ is uniformly integrable  since
these random variables appear by stopping a martingale with bounded
stopping times. Thus, 
\begin{equation}
    \label{eq:4}
      \left\{{}^{\tau_n}\! M_t:n\geq 1\right\}\text{ is uniformly
        integrable for all }t\in \R.
  \end{equation} 
Since $\tau_n \uparrow \tau$ a.s.,  we have      ${}^{\tau_n}\! M_t\rightarrow
  {}^{\tau}\! M_t$ a.s.\  and    in $L^1(P)$ by \eqref{eq:4}. For all
  $n\geq 1$,  $\tau_n$ is  bounded from below
  and hence ${}^{\tau_n}\! M$ is a martingale, implying that
  ${}^{\tau}\! M$ is an $L^1(P)$-limit of martingales and hence a
  martingale.
\end{proof}
\begin{example}\label{qsx} Let $X=(X_t)_{t\in \R}$ denote a L\'evy
  process indexed by  $\R$. \textit{The filtration generated by the increments of
    $X$}  is   $\mcalF_{\bdot}^{\mathcal{I}X}=
  (\mcalF_t^{\mathcal{I}X})_{t\in \R}$, where 
\begin{align*}
\mcalF_t^{\mathcal{I}X} &= \sigma(\Ds X_t : s\leq t)\vee \mathcal{N} 
         =\sigma(\Ds X_u : s\leq u \leq t) \vee \mathcal{N}, \quad
         \mbox{for } t\in \R, 
\end{align*} 
and  we recall that $\mathcal{N}$ is the set of $P$-null sets. Using
a standard technique it can be verified that $\mcalF_{\bdot}^{\mathcal{I}X}$ is   a
filtration. Indeed, we only have to verify right-continuity of 
$\mcalF_{\bdot}^{\mathcal{I}X}$. 
 For this, fix $t\in \R$ and 
 consider random variables $Z_1$ and $Z_2$
 where $Z_1$ is bounded and
 $\mathcal{F}_t^{\mathcal{I}X}$-measurable, and  $Z_2$ is
 bounded and measurable with respect to $\sigma(\Ds X_u: t+\epsilon <
 s <u)$ for some $\epsilon >0$.   Then
 \begin{equation*}
 E[Z_1Z_2 | \mathcal{F}_{t+}^{\mathcal{I}X}] = Z_1 E[Z_2] = E[Z_1Z_2 |
 \mathcal{F}_{t}^{\mathcal{I}X}] \quad P\mbox{-a.s.}
 \end{equation*}
 by independence of  $Z_2$ and $\mathcal{F}_{t+}^{\mathcal{I}X}$. Applying the monotone class
 lemma it follows that whenever $Z$ is bounded and measurable with
 respect to  $\mcalF_\infty^{\mathcal{I}X}$  we have
 $E[Z|\mathcal{F}_{t+}^{\mathcal{I}X}] = E[Z|
 \mathcal{F}_t^{\mathcal{I}X} ]$ $P$-a.s., which in turn
 implies right-continuity of  $\mcalF_{\bdot}^{\mathcal{I}X}$.
It is readily seen that  $X\in
\mathcal{IM}(\mcalF_{\bdot}^{\mathcal{I}X})$ if $X$ has integrable centered increments. 
\end{example}

Increment martingales  are not necessarily integrable. But for
$M=(M_t)_{t\in \R} \in \mathcal{IM}(\mcalF_{\bdot})$,  $M_t\in L^1(P)$ for all
$t\in \R$ if and only if $M_t\in L^1(P)$ for some $t\in \R$.
Likewise $(M_s)_{s\leq t}$ is uniformly integrable for all  $t$ if and only if
$(M_s)_{s\leq t}$ is uniformly integrable for some $t$.
Similarly,  for $M\in \mathcal{IM}^2(\mcalF_{\bdot})$ we have  $M_t\in
L^2(P)$ for  all
$t\in \R$ if and only if $M_t\in L^2(P)$ for some $t\in \R$,  and
$(M_s)_{s\leq t}$ is $L^2(P)$-bounded for some $t$ if and only if
$(M_s)_{s\leq t}$ is $L^2(P)$-bounded  for some $t$.
For integrable
elements of $ \mathcal{IM}(\mcalF_{\bdot})$ we have the following
decomposition. 

\begin{proposition}\label{p3.7}  Let
$M=(M_t)_{t\in \R} \in \mathcal{IM}(\mcalF_{\bdot})$
  be integrable. Then $M$ can be decomposed uniquely up to
  $P$-indistinguishability  as $M=K +N$ where
  $K=(K_t)_{t\in \R} \in \mathcal{M}(\mcalF_{\bdot})$ and
  $N=(N_t)_{t\in \R} \in \mathcal{IM}(\mcalF_{\bdot})$ is an integrable
  process satisfying  
\begin{equation} \label{ml}
E[N_t| \mathcal{F}_t] =0 \mbox{ for all } t\in \R \quad \mbox{and}
\quad \lim_{t\to \infty} N_t=0 \ P\mbox{-a.s.\ and in } L^1(P). 
\end{equation}
If $M$ is square integrable then so are $K$ and $N$,  and $E[K_t
N_t]=0$ for all $t\in \R$. Thus $E[M_t^{2}]=E[K_t^2]+E[N_t^2]$
for all $t$ and moreover $t\mapsto E[N_t^2]$ is decreasing.
\end{proposition}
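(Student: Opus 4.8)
The plan is to read off the only possible candidate for $K$ directly from the constraints, which gives uniqueness and existence simultaneously. Applying $E[\,\cdot\,|\mcalF_t]$ to $M=K+N$ and using that $K$ is adapted while $E[N_t|\mcalF_t]=0$ forces $K_t=E[M_t|\mcalF_t]$ and $N_t=M_t-E[M_t|\mcalF_t]$ for every $t$. This already yields uniqueness: any two admissible decompositions agree in $K_t$ (hence in $N_t$) for each $t$, and being \ca\ they are indistinguishable. So it remains to define $K$ this way and verify the asserted properties.

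First I would check that $t\mapsto E[M_t|\mcalF_t]$ is a martingale. It is adapted and integrable, and for $s<t$ the increment martingale property gives $E[\Ds M_t|\mcalF_s]=0$, i.e.\ $E[M_t|\mcalF_s]=E[M_s|\mcalF_s]$; hence $E[\,E[M_t|\mcalF_t]\,|\mcalF_s]=E[M_t|\mcalF_s]=E[M_s|\mcalF_s]$, which is the martingale identity. Since $\mcalF_{\bdot}$ is right-continuous and complete I pick the \ca\ version $K\in\mcalM(\mcalF_{\bdot})$ and set $N:=M-K$. Then $N$ is \ca, integrable, and $E[N_t|\mcalF_t]=0$; moreover $\Ds N=\Ds M-\Ds K$ is a difference of martingales, so $N\in\mathcal{IM}(\mcalF_{\bdot})$. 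The square-integrable assertions at a fixed $t$ are then immediate: conditional expectation is an $L^2$-contraction, so $K_t,N_t\in L^2(P)$; since $K_t$ is $\mcalF_t$-measurable and $E[N_t|\mcalF_t]=0$ we get $E[K_tN_t]=E[K_t\,E[N_t|\mcalF_t]]=0$, whence $E[M_t^2]=E[K_t^2]+E[N_t^2]$.

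The substantive work is the behaviour of $N$ at $+\infty$, and the device I would use is the identity
\begin{equation*}
N_t = M_s - E[M_s\,|\,\mcalF_t]\qquad (s\le t),
\end{equation*}
which follows from $E[M_s|\mcalF_t]=E[M_t|\mcalF_t]-E[\Ds M_t|\mcalF_t]=K_t-\Ds M_t$ together with $M_s=M_t-\Ds M_t$ and the $\mcalF_t$-measurability of $\Ds M_t$. Fixing $s$ and letting $t\to\infty$, L\'evy's upward convergence theorem gives $E[M_s|\mcalF_t]\to E[M_s|\mcalF_\infty]$ $P$-a.s.\ and in $L^1(P)$, so $N_t\to M_s-E[M_s|\mcalF_\infty]$. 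The same identity also delivers the monotonicity in the square-integrable case: by orthogonality $E[N_t^2]=E[M_s^2]-E[(E[M_s|\mcalF_t])^2]$, and the subtracted term increases in $t$ as $\mcalF_t$ grows, so $t\mapsto E[N_t^2]$ is decreasing.

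The step I expect to be the main obstacle is showing that the limit $M_s-E[M_s|\mcalF_\infty]$ actually vanishes, i.e.\ that the \emph{level} $M_s$ (not merely its increments, which lie in $\mcalF_\infty$) is $\mcalF_\infty$-measurable. This is exactly the point at which the ambient assumption $\mcalF=\mcalF_\infty=\vee_u\mcalF_u$ must be invoked: granting it, $E[M_s|\mcalF_\infty]=M_s$ and therefore $N_t\to 0$ $P$-a.s.\ and in $L^1(P)$, which also forces $E[N_t^2]\downarrow 0$. I would scrutinise this measurability carefully, since the defect $M_s-E[M_s|\mcalF_\infty]$ is independent of $s$ (as all increments are $\mcalF_\infty$-measurable), and without it the entire $+\infty$-convergence can genuinely fail.
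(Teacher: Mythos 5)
Your proof is correct and is essentially the paper's own argument: the paper likewise takes $K_t=E[M_t|\mcalF_t]$, obtains the martingale property from $E[\Ds M_t|\mcalF_s]=0$, and settles the behaviour at $+\infty$ via the identity $N_t=N_s-E[N_s|\mcalF_t]$ for $s\le t$, which coincides with your $N_t=M_s-E[M_s|\mcalF_t]$ since $E[N_s|\mcalF_t]=E[M_s|\mcalF_t]-K_s$. The square-integrable assertions are also handled the same way, up to cosmetic rearrangement: the paper computes $E[N_s(N_t-N_s)]=-E[(N_t-N_s)^2]$ and hence $E[N_t^2]=E[N_s^2]-E[(N_t-N_s)^2]$, where you instead write $E[N_t^2]=E[M_s^2]-E[(E[M_s|\mcalF_t])^2]$; both give the monotonicity.

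The one substantive difference is the caveat you raise at the end, and you are right to raise it: it is exactly the point the paper's proof glosses over. From $N_t=N_s-E[N_s|\mcalF_t]$ the paper concludes $\lim_{t\to\infty}N_t=0$ with no further comment, which by L\'evy's upward theorem is valid only if $N_s$ (equivalently $M_s$, since $K_s$ is adapted) is a.s.\ equal to an $\mcalF_\infty$-measurable variable. The paper's setup does not assume $\mcalF=\mcalF_\infty$, and $M$ is not assumed adapted, so this is a tacit hypothesis: if $Z$ is integrable, non-degenerate and independent of $\mcalF_\infty$, then $M_t:=Z$ defines an integrable increment martingale whose only candidate decomposition ($K_t=E[Z]$, $N_t=Z-E[Z]$, forced by the uniqueness argument) violates \eqref{ml}. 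So your explicit appeal to the $\mcalF_\infty$-measurability of $M_s$ is not a defect of your proof but a correction that the paper's own argument needs.
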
 
  
\begin{proof}
The uniqueness is evident. To get the existence set 
$K_t = E[M_t| \mathcal{F}_t]$. Then $K$ is integrable and
adapted and for $s<t$ we have 
\begin{align*}
 E[K_t | \mathcal{F}_s] = E[M_t | \mathcal{F}_s]=E[M_s| 
\mathcal{F}_s] + E[\Ds M_t | \mathcal{F}_s]= K_s.
\end{align*}
Thus, $K \in \mathcal{M}(\mcalF_{\bdot})$ and therefore $N:= M-K \in
\mathcal{IM}(\mcalF_{\bdot})$. Clearly, $N$ is integrable and $E[N_t 
|  \mathcal{F}_t] =0$ for all $t\in \R$. Take $s\leq t$. Then
$\Ds N_t = E[\Ds N_t| \mathcal{F}_t]$, giving 
\begin{equation}\label{qwe}
\Ds N_t = E[N_t - N_s | \mathcal{F}_t] = -E[ N_s|
\mathcal{F}_t],
\end{equation}
that is $N_t = N_s - E[ N_s | \mathcal{F}_t]$,
proving that $\lim_{t\to \infty} N_t=0$ $P$-a.s.\ and in $ L^1(P)$. 
If $M$ is square integrable then so are  $K$ and $N$
and they are orthogonal. Furthermore for  $s\leq t$
\begin{align*}
  & E[N_s(N_t-N_s)]=E[(N_t-N_s)E[N_s|\mathcal{F}_t]]\\
& \qquad =E[(N_t-N_s)E[(N_s-N_t)|\mathcal{F}_t]]
=-E[(N_t-N_s)^2]
\end{align*}
implying
\begin{equation} \label{aze}
E[N_t^2]=E[N_s^2]-E[(N_t-N_s)^2].
\end{equation}
\end{proof}
As a corollary we may deduce the following convergence result for integrable
increment martingales.
\begin{corollary}\label{c3.8} Let
$M=(M_t)_{t\in \R} \in \mathcal{IM}(\mcalF_{\bdot})$
  be integrable. 
 \begin{itemize}
\item[(a)] If $(M_s)_{s\leq 0}$ is  uniformly integrable then
  $M_{-\infty}:=  \lim_{s\to -\infty} M_s$ exists $P$-a.s.\  and
in $L^1(P)$ and  $(M_t- M_{-\infty})_{t\in     \R}$  is in $\mcalM(\mcalF_{\bdot})$. 
\item[(b)] If $(M_s)_{s\leq 0}$ is bounded in $L^2(P)$ then  
$M_{-\infty}:= \lim_{s\to -\infty} M_s$ exists $P$-a.s.\  and
in $L^2(P)$ and  $(M_t- M_{-\infty})_{t\in     \R}$  is in $\mcalM^2(\mcalF_{\bdot})$.
\end{itemize}
 \end{corollary}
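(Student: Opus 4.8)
The plan is to reduce everything to the decomposition $M = K + N$ furnished by Proposition \ref{p3.7}, in which $K \in \mcalM(\mcalF_{\bdot})$ (resp.\ $\mcalM^2(\mcalF_{\bdot})$ when $M$ is square integrable) and $N \in \mathcal{IM}(\mcalF_{\bdot})$ is integrable with $E[N_t|\mathcal{F}_t]=0$ for all $t$ by \eqref{ml}. The martingale part is immediate: by backward martingale convergence (Remark \ref{-in}(1)), $K_s$ converges $P$-a.s.\ and in $L^1(P)$, and in $L^2(P)$ in case (b), to an $\mathcal{F}_{-\infty}$-measurable $K_{-\infty}$ as $s\to-\infty$. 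Since $K_s = E[M_s|\mathcal{F}_s]$ and conditional expectation is an $L^1$- and $L^2$-contraction, the hypothesis that $(M_s)_{s\le 0}$ is uniformly integrable (resp.\ $L^2$-bounded) passes to $(K_s)_{s\le 0}$ and hence, through $N=M-K$, to $(N_s)_{s\le 0}$. Thus the whole problem comes down to proving that $N_s$ converges as $s\to-\infty$.

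The difficulty is that $N$ is not adapted and is \emph{not} itself a backward martingale; indeed one checks that $E[N_{s'}|\mathcal{F}_s]=0$ for $s\le s'$, so no convergence theorem applies to $N$ directly. The idea I would use is to extract a candidate limit by weak compactness and then recognise $N$ as the increment of a genuine Doob martingale. From $E[N_t|\mathcal{F}_t]=0$ and the fact that $N_t-N_s$ is $\mathcal{F}_t$-measurable, one gets, exactly as in the derivation of \eqref{qwe}, the identity
\begin{equation*}
N_t = N_s - E[N_s|\mathcal{F}_t], \qquad s\le t.
\end{equation*}
Using that $(N_s)_{s\le 0}$ is uniformly integrable (case (a), Dunford--Pettis) or $L^2$-bounded (case (b), reflexivity of $L^2$), I choose a sequence $s_k\to-\infty$ along which $N_{s_k}\rightharpoonup \xi$ weakly in $L^1(P)$, resp.\ $L^2(P)$. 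Since $Y\mapsto E[Y|\mathcal{F}_t]$ is a bounded linear operator and hence weakly continuous, letting $k\to\infty$ in the displayed identity (with $t$ fixed and $s_k\le t$) yields the representation $N_t = \xi - E[\xi|\mathcal{F}_t]$ for every $t\in\R$. Now $s\mapsto E[\xi|\mathcal{F}_s]$ is a backward martingale, so by Remark \ref{-in}(1) it converges $P$-a.s.\ and in $L^1(P)$ (resp.\ $L^2(P)$) to $E[\xi|\mathcal{F}_{-\infty}]$; hence $N_s = \xi - E[\xi|\mathcal{F}_s]$ converges $P$-a.s.\ and in $L^1(P)$ (resp.\ $L^2(P)$) to $N_{-\infty}:=\xi-E[\xi|\mathcal{F}_{-\infty}]$. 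Only one weak subsequential limit is needed: once the representation holds, convergence of the full family $(N_s)$ is automatic.

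Combining this with the convergence of $K$ gives that $M_{-\infty}:=\lim_{s\to-\infty}M_s$ exists $P$-a.s.\ and in $L^1(P)$ (resp.\ $L^2(P)$). It then remains to identify $M-M_{-\infty}$ as a martingale. Since $M_{-\infty}$ exists in probability, Remark \ref{nygo} shows $(M_t-M_{-\infty})_{t\in\R}$ is adapted; it has the same increments as $M$, so $\Ds(M-M_{-\infty})=\Ds M\in\mcalM(\mcalF_{\bdot})$ for all $s$, whence $M-M_{-\infty}\in\mathcal{IM}(\mcalF_{\bdot})$; and it is integrable (resp.\ square integrable) because $M$ and $M_{-\infty}$ are. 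The characterization \eqref{k1} (resp.\ \eqref{k2}) then gives $M-M_{-\infty}\in\mcalM(\mcalF_{\bdot})$ (resp.\ $\mcalM^2(\mcalF_{\bdot})$), as claimed. The one genuinely delicate point is the middle step, since $N$ carries no martingale structure in $s$: the crux is the passage to a weak limit producing $\xi$, together with the representation $N_t=\xi-E[\xi|\mathcal{F}_t]$, which converts the question into backward convergence of the Doob martingale of $\xi$. In case (b) one can alternatively avoid weak compactness, since by \eqref{aze} the map $s\mapsto E[N_s^2]$ is bounded and nondecreasing as $s\to-\infty$, so $E[(N_t-N_s)^2]=E[N_s^2]-E[N_t^2]\to0$ and $(N_s)$ is Cauchy in $L^2(P)$.
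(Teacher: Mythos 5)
Your proof is correct and follows essentially the same route as the paper's: decompose $M=K+N$ via Proposition \ref{p3.7}, extract a weak limit of $(N_{s_n})$ along a sequence $s_n\to-\infty$, use the identity \eqref{qwe} to obtain the representation $N_t=\tilde M-E[\tilde M|\mathcal{F}_t]$ for all $t$, and conclude by backward martingale convergence. The only difference is cosmetic: in part (b) the paper gets the $L^2(P)$-convergence directly from \eqref{aze} (the Cauchy argument you mention as an alternative at the end) rather than from weak compactness in $L^2(P)$, and it leaves the final identification of $M-M_{-\infty}$ as a martingale implicit, which you spell out via Remark \ref{nygo} and \eqref{k1}--\eqref{k2}.
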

\begin{proof} 
Write $M=K+N$ as in Proposition \ref{p3.7}. As noticed  in Remark
\ref{-in}   
the conclusion holds for $K$. Furthermore
$(N_s)_{s\leq 0}$  is uniformly integrable when this 
is true for $M$ so we may and will assume $M=N$. That is, 
$M$ satisfies \eqref{ml}. By uniform integrability we can
find a sequence $s_n$ decreasing to $-\infty $ and an $\tilde{M}\in L^1(P)$
such that $M_{s_n}\to \tilde{M}$ in $\sigma (L^1,L^\infty)$. For all $t$
we have by  \eqref{qwe}
\begin{equation*}
M_t=M_{s_n}-E[M_{s_n}| \mathcal{F}_{t}]\quad  \text{for}\ s_n<t
\end{equation*}
 and thus
\begin{equation*}
M_t=\tilde{M}-E[\tilde{M}| \mathcal{F}_{t}]\quad  \text{for all}\ t,
\end{equation*}
proving part (a). In (b) the martingale part $K$ again has the right behaviour 
at $-\infty$. Likewise,  
$(N_s)_{s\leq 0}$ is bounded in $L^2(P)$ if this is true for $M$.
Thus we may assume that $M$ satisfies \eqref{ml}. The a.s.\ convergence is already
proved and the $L^2(P)$-convergence follows from \eqref{aze}  since
$t\mapsto E[M_t]$ is decreasing and $\sup _{s<0}E[M^2_s]<\infty $.
\end{proof}
Observe  that $(M_t- M_{t_0})_{t\in \R}$ is in $\mathcal{IM}(\mcalF_{\bdot})$ and
 is integrable
for every  $t_0\in \R$ and every $M\in \mathcal{IM}(\mcalF_{\bdot})$. Since a similar
result holds in the square integrable case,  
Corollary \ref{c3.8}  implies the following result relating convergence  of
an increment   martingale  to the 
martingale property.  

\begin{proposition}\label{zx} Let $M=(M_t)_{t\in \R}$
be a given \ca\ process.
The following are equivalent:   
\begin{itemize}
\item[(a)] $M_{-\infty}:= \lim_{s\to -\infty} M_s$ exists $P$-a.s.\  and
  $(M_t- M_{-\infty})_{t\in     \R}$  is in $\mcalM(\mcalF_{\bdot})$. 
\item[(b)]$M\in \mathcal{IM}(\mcalF_{\bdot})$ and  $(\Ds M_{0})_{s <0}$
  is uniformly integrable.  
\end{itemize}
Likewise, the following are equivalent:
\begin{itemize}
\item[(c)] 
$M_{-\infty} := \lim_{s\to -\infty} M_s$ exists $P$-a.s.\ 
  and  $(M_t- M_{-\infty})_{t\in     \R}$  is in $\mcalM^2(\mcalF_{\bdot})$
 \item[(d)]$M\in \mathcal{IM}^2(\mcalF_{\bdot})$ and $\sup_{s: s\leq 0} E[(\Ds M_0)^2] <\infty$.  
\end{itemize}
\end{proposition}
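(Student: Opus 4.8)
The plan is to deduce both equivalences directly from Corollary \ref{c3.8} by translating $M$ by the fixed random variable $M_0$. The guiding observation is that adding or subtracting a random variable $Z$ that does not depend on $t$ leaves all increments unchanged: if $\tilde M_t = M_t - Z$, then $\Ds \tilde M_t = \Ds M_t$ for all $s,t$. Hence membership in $\mathcal{IM}(\mcalF_{\bdot})$ and in $\mathcal{IM}^2(\mcalF_{\bdot})$, being defined through the increments, is the same for $M$ and for any such $\tilde M$. The single computational fact I would use repeatedly is the identity $\Ds M_0 = M_0 - M_s$, valid for $s\leq 0$ since then $0\wedge s = s$.

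To prove that (b) implies (a) I would set $\tilde M = (M_t - M_0)_{t\in\R}$. By the observation preceding the proposition, $\tilde M \in \mathcal{IM}(\mcalF_{\bdot})$ and is integrable. For $s<0$ the identity above gives $\tilde M_s = M_s - M_0 = -\Ds M_0$, while $\tilde M_0 = 0$, so $(\tilde M_s)_{s\leq 0}$ is uniformly integrable precisely because $(\Ds M_0)_{s<0}$ is. Corollary \ref{c3.8}(a) then applies to $\tilde M$ and gives that $\tilde M_{-\infty} := \lim_{s\to-\infty}\tilde M_s$ exists $P$-a.s.\ and that $\tilde M - \tilde M_{-\infty} \in \mcalM(\mcalF_{\bdot})$. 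Translating back, $M_{-\infty} := \tilde M_{-\infty} + M_0 = \lim_{s\to-\infty} M_s$ exists $P$-a.s., and since $M_t - M_{-\infty} = \tilde M_t - \tilde M_{-\infty}$ we obtain $(M_t - M_{-\infty})_{t\in\R} \in \mcalM(\mcalF_{\bdot})$, which is (a).

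For the converse (a) implies (b) I would put $M' = (M_t - M_{-\infty})_{t\in\R} \in \mcalM(\mcalF_{\bdot})$. As $M'$ and $M$ have identical increments, $M\in\mathcal{IM}(\mcalF_{\bdot})$ follows from $M'\in\mcalM(\mcalF_{\bdot})\subseteq\mathcal{IM}(\mcalF_{\bdot})$. It remains to check uniform integrability of $(\Ds M_0)_{s<0}$, and this is the only step that is more than bookkeeping: here I would invoke the martingale structure, namely $M'_s = E[M'_0|\mcalF_s]$ for $s\leq 0$, which exhibits $(M'_s)_{s\leq 0}$ as conditional expectations of a single integrable variable and hence as a uniformly integrable family (backward martingale convergence, Remark \ref{-in}(1), also gives $M'_{-\infty}=0$). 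Since $\Ds M_0 = M_0 - M_s = M'_0 - M'_s$ for $s<0$, the family $(\Ds M_0)_{s<0}$ is the sum of the fixed $M'_0$ and a uniformly integrable family, hence uniformly integrable, giving (b).

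Finally, the equivalence of (c) and (d) would be proved in exactly the same way, using Corollary \ref{c3.8}(b) in place of (a), replacing ``uniformly integrable'' by ``bounded in $L^2(P)$'' throughout, and using the square integrable version of the observation preceding the proposition so that $\tilde M = M - M_0 \in \mathcal{IM}^2(\mcalF_{\bdot})$. The translation between $L^2$-boundedness of $(\tilde M_s)_{s\leq 0}$ and finiteness of $\sup_{s\leq 0} E[(\Ds M_0)^2]$ is immediate from $\tilde M_s = -\Ds M_0$; and for the direction (c) implies (d) the bound $\sup_{s\leq 0} E[(\Ds M_0)^2]<\infty$ follows from the martingale orthogonality identity $E[(M'_0 - M'_s)^2] = E[(M'_0)^2] - E[(M'_s)^2] \leq E[(M'_0)^2]$. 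I expect no genuine obstacle beyond the one already flagged in the direction (a) implies (b), namely extracting uniform integrability of the increments from the martingale property via the representation $M'_s = E[M'_0|\mcalF_s]$.
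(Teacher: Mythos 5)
Your proposal is correct and follows essentially the same route as the paper's own (much terser) proof: the implications (b)\,$\Rightarrow$\,(a) and (d)\,$\Rightarrow$\,(c) by applying Corollary~\ref{c3.8} to $(M_t-M_0)_{t\in\R}$, and the converses by standard martingale theory together with the identity $\Ds M_0=(M_0-M_{-\infty})-(M_s-M_{-\infty})$. You have merely written out the details (uniform integrability of conditional expectations, the $L^2$ orthogonality bound) that the paper leaves to the reader.
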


\begin{proof} Assuming $M\in
  \mathcal{IM}(\mcalF_{\bdot})/\mathcal{IM}^2(\mcalF_{\bdot})$, 
(b) $\Rightarrow$ (a) and (d) $\Rightarrow$ (c) follow  by
using Corollary 3.8 on $(M_t-M_0)_{t\in \R}$. The remaining two implications
follow  from standard martingale theory and the identity $\Ds
M_{0}=(M_0-M_{-\infty}) -(M_s-M_{-\infty})$.
\end{proof}

Let $M\in \mathcal{LM}(\mcalF_{\bdot})$ with
$M_{-\infty}=0$.  It is well-known that there 
exists a  unique (up to $P$-indistinguishability)  process  $[M]$
called \textit{the quadratic variation for $M$} 
satisfying   $[M]\in
\mathcal{A}_0(\mcalF_{\bdot})$, $(\Delta M)_t^2  = \Delta [M]_t$  for all
$t\in \R$ $P$-a.s.,    and $M^2- [M] \in
\mathcal{LM}(\mcalF_{\bdot})$. We have 
\begin{equation}\label{ee}
\Dsu  [M] \Peq  [\Ds M]\  \mbox{  for  }\  s\in \R \ \mbox{  and  }\  [M]^{\sigma}
\Peq [M^\sigma] \mbox{  when  } \sigma \mbox{ is a stopping time.} 
\end{equation}
If, in addition,  $M\in \mathcal{LM}^2(\mcalF_{\bdot})$,    there is a unique predictable process $\langle
M\rangle \in \mathcal{LA}^1_0(\mcalF_{\bdot})$ satisfying $M^2 - \langle
M\rangle \in \mathcal{LM}(\mcalF_{\bdot})$, and we shall call this process
the \textit{predictable quadratic variation  for  $M$}.  In this case,  
\begin{equation}\label{gg}
\Ds \langle M\rangle  \Peq  \langle \Ds M\rangle \  \mbox{  for  }\
s\in \R \ \mbox{  and  }\  \langle M\rangle^{\sigma}
\Peq \langle M^\sigma \rangle  \mbox{  when  } \sigma \mbox{ is a stopping time.} 
\end{equation}

\begin{definition}\label{godef} 

  Let $M\in \mathcal{ILM}(\mcalF_{\bdot})$.  We  say that  an increasing process
$V=(V_t)_{t\in \R}$ is a \textit{gene\-ralised quadratic   variation}
for $M$ if
\begin{gather}
\label{nr0}
V\in\mathcal{IA}(\mcalF_{\bdot})\\ \label{nr1}
(\Delta M)_t^2  = \Delta V_t  \quad   \mbox{for all }  t\in \R,\ 
P\mbox{-a.s.} \\
(\Ds M)^2 - \Dsu V \in \mathcal{LM}(\mcalF_{\bdot}) \quad  \mbox{for
  all }  s\in \R. \label{nr2} 
\end{gather}
We say that $V$ is \textit{quadratic variation for $M$} if, instead
of \eqref{nr0}, $V\in \mathcal{A}_0(\mcalF_{\bdot})$. 

 Let $M\in \mathcal{ILM}^2(\mcalF_{\bdot})$. We say that an
  increasing process $V=(V_t)_{t\in \R}$ is a \textit{generalised
    predictable quadratic variation }  for
  $M$ if  
\begin{gather}
V\in \mathcal{ILA}^1(\mcalF_{\bdot})  \label{uu1}\\
\Dsu V  \mbox{ is predictable for all } s\in \R\label{uu2}\\
(\Ds M)^2 - \Dsu  V \in \mathcal{LM}(\mcalF_{\bdot}) \quad  \mbox{for
  all }  s\in \R. \label{uu3} 
\end{gather}
We say that $V$ is a \textit{predictable quadratic variation}  for $V$ if, instead of
\eqref{uu1}, $V\in\nobreak \mathcal{LA}^1_0(\mcalF_{\bdot})$.
\end{definition}

\begin{remark} \label{lk}

(1) Let $M\in \mathcal{ILM}(\mcalF_{\bdot})$ and  $V$ denote a generalised
quadratic variation for $M$ such that $V_{-\infty}:= \lim_{s\to
  -\infty} V_s$ exists $P$-a.s. From Remark \ref{nygo} it  follows that
$(V_t -V_{-\infty})_{t\in \R}$ is a quadratic variation for $M$.

Similarly, let  $M\in \mathcal{ILM}^2(\mcalF_{\bdot})$ and  $V$ denote a
generalised predictable 
quadratic variation for $M$ such that $V_{-\infty}:= \lim_{s\to
  -\infty} V_s$ exists $P$-a.s. Then $(V_t -V_{-\infty})_{t\in \R}$ is
a  predictable quadratic variation for $M$. Indeed,  by   Jacod and
Shiryaev  (2003), Lemma I.3.10,
$( V_t - V_{-\infty})_{t\in \R}$  is a predictable process  in
$\mathcal{LA}_0^1(\mcalF_{\bdot})$. (Strictly speaking, this lemma 
  only ensures the existence of an $\bar{\R}$-value 
localizing sequence $(\sigma_n)_{n\geq 1}$ (cf.\ Remark \ref{-in}
(3))  such that $( V_t - V_{-\infty})^{\sigma_n}$ is in
$\mathcal{A}_0^1(\mcalF_{\bdot})$; this problem can, however, be dealt with
as described in   Remark \ref{-in}).

(2) If  $M\in \mathcal{LM}(\mcalF_{\bdot})$ with
$M_{-\infty}=0$  then the usual quadratic variation $[M]$ for $M$ is,
by \eqref{ee},  also a quadratic variation in the sense of Definition
\ref{godef}, and similarly, if  $M\in \mathcal{LM}^2(\mcalF_{\bdot})$ then  the
usual predictable quadratic variation $\langle M\rangle $ is a predictable quadratic variation also in the
sense defined above. 

(3) (Existence of generalised quadratic variation). Let $M\in \mathcal{ILM}(\mcalF_{\bdot})$. Then $V$ is a generalised
quadratic variation for $M$ if and only if we have \eqref{nr0}--\eqref{nr1} and $V$ is associated
with the family $\{[\Ds M]\}_{s\in \R}$. By Section \ref{sect2}, 
 existence and uniqueness (up to addition of random
variables) of the generalised quadratic
variation is thus ensured  once we have shown that the latter family is consistent. 
In other words, we must show for $s\leq t\leq u$ that 
$[\Ds M]_u = [\Ds M]_t + [\Dt M]_u$ 
$P$-a.s. Equivalently, $\Dt([\Ds M])_u = [\Dt M]_u$ $P$-a.s. This
follows,  however, from \eqref{ee} and \eqref{go}.

(4) (Existence of generalised predictable quadratic variation). Similarly,  let
$M\in \mathcal{ILM}^2(\mcalF_{\bdot})$. Then  $V$  is a
generalised   predictable quadratic variation for $M$ if and only if we have
\eqref{uu1}--\eqref{uu2} and $V$ is associated with $\{\langle \Ds M
\rangle\}_{s \in \R}$. Moreover, the latter family is consistent,
ensuring existence and uniqueness of the generalised predictable quadratic variation up to
addition of random variables.

(5) By Remark \ref{nygo}, the quadratic variation and the predictable quadratic variation are
unique up to $P$-indistinguishability when they exist.

(6) Generalised compensators and predictable compensators are
$\inl$-invariant,  i.e.\  if   for example $M, N\in \mathcal{IM}(\mcalF_{\bdot})$
with $M\inl N$ then $V$ is a generalised compensator for $M$ if and
only if it is a  generalised compensator for $N$.    

\end{remark}

When $M\in \mathcal{ILM}(\mcalF.)$ we use $[M]^\gen$ to denote a
generalised quadratic variation for $M$, and $[M]$ denotes the
quadratic variation when it exists. For $M\in
\mathcal{ILM}^2(\mcalF.)$, $\langle M\rangle ^\gen$ denotes a
generalised quadratic variation for $M$, and $\langle M\rangle $
denotes the predictable quadratic variation when it exists.
Generalising \eqref{ee}--\eqref{gg} we have the following.

\begin{lemma}
Let $\sigma$ denote a stopping time and $s\in \R$. If $M\in
\mathcal{ILM}(\mcalF_{\bdot})$ then 
\begin{equation}
([M]^\gen)^\sigma\inl [M^\sigma]^\gen\quad \mbox{and} \quad
\Ds([M]^\gen)\Peq 
[\Ds M]. \label{bx}
\end{equation}
If $M\in \mathcal{ILM}^2(\mcalF_{\bdot})$ then 
\begin{equation*}
(\langle M\rangle^\gen)^\sigma\inl \langle M^\sigma\rangle^\gen\quad
\mbox{and} \quad \Ds (\langle M\rangle^\gen)\Peq 
\langle \Ds M\rangle.
\end{equation*}
\end{lemma}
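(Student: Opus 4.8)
The plan is to reduce both assertions to statements about increments, exploiting that by definition $X \inl Y$ means $\Ds X \Peq \Dsu Y$ for every $s \in \R$, and that the additive-random-variable ambiguity inherent in $[M]^\gen$ (it is determined only up to addition of a random variable) is precisely what the relation $\inl$ ignores. The second identity in \eqref{bx}, namely $\Ds([M]^\gen) \Peq [\Ds M]$, should require essentially no work: by Remark \ref{lk}(3) a generalised quadratic variation $V = [M]^\gen$ is exactly a process associated with the consistent family $\{[\Ds M]\}_{s\in\R}$, and ``associated'' means by definition that $\Ds V \Peq [\Ds M]$ for all $s$.

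For the first identity I would fix $s \in \R$ and apply $\Ds$ to each side. The key preliminary observation is that $\Ds M \in \mathcal{LM}(\mcalF_{\bdot})$ with $(\Ds M)_{-\infty} = 0$ (since $\Ds M_t = 0$ for $t \leq s$), so the classical quadratic variation $[\Ds M]$ exists and the stopping part of \eqref{ee} applies to it. I would then chain together, for the given $s$,
\[
\Ds\bigl(([M]^\gen)^\sigma\bigr) \Peq \bigl(\Ds([M]^\gen)\bigr)^\sigma \Peq ([\Ds M])^\sigma \Peq [(\Ds M)^\sigma] \Peq [\Ds(M^\sigma)] \Peq \Ds\bigl([M^\sigma]^\gen\bigr),
\]
where the first and fourth equalities use \eqref{go}, the second and last use the second identity established above (applied to $M$ and to $M^\sigma$, the latter lying in $\mathcal{ILM}(\mcalF_{\bdot})$ by stability under stopping), and the middle equality is the stopping part of \eqref{ee} for the local martingale $\Ds M$. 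Since $s$ is arbitrary, this yields $([M]^\gen)^\sigma \inl [M^\sigma]^\gen$.

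The predictable case is entirely parallel: one replaces Remark \ref{lk}(3) by Remark \ref{lk}(4), so that $\langle M\rangle^\gen$ is associated with $\{\langle \Ds M\rangle\}_{s\in\R}$ and the identity $\Ds(\langle M\rangle^\gen) \Peq \langle \Ds M\rangle$ holds by definition; one uses that $\Ds M \in \mathcal{LM}^2(\mcalF_{\bdot})$ with limit $0$ at $-\infty$ so that $\langle \Ds M\rangle$ is available, and invokes the stopping part of \eqref{gg} in place of \eqref{ee}. The only step demanding care is the bookkeeping at $-\infty$: one must check that each stopped increment $(\Ds M)^\sigma$ still tends to $0$ as $t \to -\infty$ (which it does, as $t \wedge \sigma \to -\infty$), so that \eqref{ee}/\eqref{gg} genuinely apply, and that the interchange $\Ds(M^\sigma) = (\Ds M)^\sigma$ from \eqref{go} is used consistently throughout. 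Beyond verifying these, the argument is pure assembly of results already proved, and I do not anticipate a substantive obstacle.
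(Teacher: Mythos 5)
Your proposal is correct and follows essentially the same route as the paper: the second identity in each pair is read off from the definition of association (Remark \ref{lk}(3)/(4)), and the first is obtained by applying $\Ds$ and chaining \eqref{go}, the association identity, and the stopping property \eqref{ee} (resp.\ \eqref{gg}) for the local martingale $\Ds M$ --- exactly the paper's display $\Ds(([M]^\gen)^\sigma) \Peq (\Dsu [M]^\gen)^\sigma \Peq [\Ds M]^\sigma \Peq [\Ds M^\sigma]$, with your extra final link $\Peq \Ds([M^\sigma]^\gen)$ being just the paper's implicit observation that two processes associated with the same consistent family have identical increments.
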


\begin{proof} We only prove the part concerning the quadratic
  variation. As seen above,  $[M]^\gen$ is associated with $\{[\Ds
  M]\}_{s\in \R}$,  which   implies  the second statement in \eqref{bx}. 

To prove the first statement in \eqref{bx} it suffices to show that
$([M]^\gen)^\sigma$ is associated with $\{[\Ds M^\sigma]\}_{s\in \R}$. 
Note that,  by 
\eqref{go} and \eqref{ee}, 
\begin{equation*}
\Ds(([M]^\gen)^\sigma) \Peq (\Dsu [M]^\gen)^\sigma   \Peq  [\Ds
M]^\sigma  \Peq 
[\Ds M^\sigma]. 
\end{equation*}  
\end{proof}

\begin{example} 

Let $\tau_1$ and $\tau_2$ denote independent  absolutely continuous random
variables with densities  $f_1$ and $f_2$  and distribution functions
$F_1$ and $F_2$ satisfying 
$F_i(t)<1$ for all $t$ and $i=1,2$.  Set 
\begin{equation*}
N_t^i= 1_{[\tau_i, \infty)}(t), \   A_t^i
=\int_{-\infty}^{t\wedge \tau_i} \tfrac{
  f_i(u)}{1-F_i(u)}\, \dd u, \  N_t=(N_t^1, N_t^1) \  \mbox{and}
\  \mcalF_t =\sigma(N_s: s\leq t)
\vee  \mathcal{N}
\end{equation*}
for $t\in \R$.   From   Br{\'e}maud  (1981), A2 T26, follows that
$(\mcalF_t)_{t\in \R}$ is right-continuous and hence a filtration in
the sense defined in the present paper. 
It is well-known  that $M^i$ defined by $M_t^i
= N_t^i - A_t^i$ is a square integrable martingale with $\langle
M^i\rangle_t = A_t^i$, and   
$M^1M^2$ is a martingale. 
Assume,  in addition, 
\begin{equation*}
\int_{-\infty}^t \tfrac{u f_i(u)}{1-F_i(u)} \, \dd u
=-\infty \quad \mbox{for all } t\in \R. 
\end{equation*}
(This is satisfied if, for example, $F_i(s)$
equals  a constant times $(1 +|s|\log(|s|))^{-1}$ when  $s$ is small.)
Let $B^i \in \mathcal{IA}^1(\mcalF_{\bdot})$ satisfy 
\begin{equation*}
\Ds B_t^i = \int_{(s,
  t]} u \, \dd A_u^i = \int_{s \wedge \tau_i}^{t\wedge \tau_i} \frac{u
  f_i(u)}{1-F_i(u)}\, \dd u
\end{equation*}
 for $s<t$  and set  $X_t^i = \tau_i
N_t^i- B_t^i$. Then   
\begin{equation*}\label{bh}
\lim_{s\to -\infty} X_s^i = -\lim_{s\to
  -\infty} B_s^i =\infty \quad \mbox{pointwise,} 
\end{equation*}
implying  that  $X^i$ is not a local
martingale. However, since for $s<t$,  
\begin{equation*}
\Ds X_t^i =\int_{(s,t]} u \, \dd M_u^i
\end{equation*}
it follows that   $\Ds X^i$ is a square
integrable martingale. That is, $X^i\in \mathcal{ILM}^2(\mcalF_{\bdot})$. 

The quadratic  variations,  $[X^i]$ resp.\ $[X^1 -X^2]$, of
$X^i$ resp.\ $X^1-X^2$ do exist and  are $[X^i]_t = (\tau_i)^2N_t^i$
resp.\  $[X^1 - X^2]_t = (\tau_1)^2N_t^1 + (\tau_2)^2N_t^2$. 
Moreover, up to addition of  random variables,  
\begin{alignat*}{2}
\liminf_{s\to   -\infty} (X_s^1 - X_s^2)&=\liminf_{s\to   -\infty}
(B_s^2 - B_s^1)&&= \liminf_{s\to -\infty} \int_s^0 u (\tfrac{f_2(u)}{1-
F_2(u)}- \tfrac{f_1(u)}{1-F_1(u)}) \, \dd u
\\
\limsup_{s\to   -\infty} (X_s^1 - X_s^2)&=\limsup_{s\to   -\infty}
(B_s^2 - B_s^1)&&= \limsup_{s\to -\infty} \int_s^0 u (\tfrac{f_2(u)}{1-
F_2(u)}- \tfrac{f_1(u)}{1-F_1(u)}) \, \dd u.
\end{alignat*}
 If $\tau_1$ and $\tau_2$ are identically distributed 
then $X_s^1- X_s^2$ converges pointwise.  In other cases we may have     $\limsup_{s\to -\infty}
(X_s^1- X_s^2) = -\liminf_{s\to -\infty} (X_s^1 - X_s^2) =\infty$
pointwise. 

To sum up,  we have seen  that even if  the quadratic variation 
  exists, the   process may or
  may not converge as time goes to $-\infty$. 

\end{example}

The next result shows in particular that for increment  local  
martingales with bounded jumps, a.s.\  convergence at   $-\infty$
is closely related to the local martingale property.

\begin{theorem}\label{vbb} Let $M\in \mathcal{ILM}^2(\mcalF_{\bdot})$. The
  following are equivalent.
\begin{itemize}
\item[(a)]  There is a   predictable quadratic variation  $\langle M \rangle  $ for $M$.
\item[(b)] $M_{-\infty}= \lim_{s\to -\infty} M_s$ exists $P$-a.s.\ and
  $(M_t - M_{-\infty})_{t\in \R} \in \mathcal{LM}^2(\mcalF_{\bdot})$. 
\end{itemize}
\end{theorem}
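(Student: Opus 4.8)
The plan is to treat the two implications separately: (b)$\Rightarrow$(a) will be immediate from the material already developed, and (a)$\Rightarrow$(b) will be reduced to Proposition~\ref{zx} by a localization. For (b)$\Rightarrow$(a) I would set $\tilde M:=(M_t-M_{-\infty})_{t\in\R}\in\mathcal{LM}^2(\mcalF_{\bdot})$; since $\tilde M_s\to0$ as $s\to-\infty$ we have $\tilde M_{-\infty}=0$, so the usual predictable quadratic variation $\langle\tilde M\rangle\in\mathcal{LA}^1_0(\mcalF_{\bdot})$ exists. By Remark~\ref{lk}(2) it is a predictable quadratic variation in the sense of Definition~\ref{godef}, and because $M\inl\tilde M$ the defining conditions \eqref{uu1}--\eqref{uu3} involve only the common increments $\Ds M\Peq\Ds\tilde M$; hence $\langle\tilde M\rangle$ serves as a predictable quadratic variation for $M$ as well, giving (a).

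For (a)$\Rightarrow$(b) I would start from a predictable quadratic variation $V:=\langle M\rangle\in\mathcal{LA}^1_0(\mcalF_{\bdot})$, so that $V_{-\infty}=0$ $P$-a.s., $V\geq0$, and there is a localizing sequence $(\sigma_n)_{n\geq1}$ with $V^{\sigma_n}\in\mathcal{A}^1(\mcalF_{\bdot})$, i.e.\ $E[V^{\sigma_n}_t]<\infty$ for all $t$. The central claim to establish is that for each $n$ one has $M^{\sigma_n}\in\mathcal{IM}^2(\mcalF_{\bdot})$ together with the uniform bound $\sup_{s\leq0}E[(\Ds M^{\sigma_n}_0)^2]<\infty$. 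To prove it, fix $s_0\in\R$ and $n$; by \eqref{go}, ${}^{s_0}\!(M^{\sigma_n})=({}^{s_0}\!M)^{\sigma_n}$, where ${}^{s_0}\!M\in\mathcal{LM}^2(\mcalF_{\bdot})$ vanishes on $(-\infty,s_0]$ and satisfies $({}^{s_0}\!M)^2-{}^{s_0}\!V\in\mathcal{LM}(\mcalF_{\bdot})$ by \eqref{uu3}. Since ${}^{s_0}\!(V^{\sigma_n})_t=V^{\sigma_n}_t-V^{\sigma_n}_{t\wedge s_0}\leq V^{\sigma_n}_t$ is integrable, the standard fact that a locally square integrable martingale with integrable predictable quadratic variation is a genuine square integrable martingale --- applied to ${}^{s_0}\!M$, which is null before $s_0$ and hence of $\R_+$-type --- gives ${}^{s_0}\!(M^{\sigma_n})\in\mathcal{M}^2(\mcalF_{\bdot})$ with ${}^{s_0}\!(V^{\sigma_n})$ as its predictable quadratic variation, whence $E[({}^{s_0}\!M^{\sigma_n}_t)^2]=E[{}^{s_0}\!(V^{\sigma_n})_t]$. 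Specializing to $s_0=s\leq0$ and $t=0$ yields $E[(\Ds M^{\sigma_n}_0)^2]=E[V^{\sigma_n}_0-V^{\sigma_n}_s]\leq E[V^{\sigma_n}_0]<\infty$, uniformly in $s\leq0$.

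With this claim, Proposition~\ref{zx} (implication (d)$\Rightarrow$(c)) applied to $M^{\sigma_n}$ will give that $(M^{\sigma_n})_{-\infty}=\lim_{s\to-\infty}M^{\sigma_n}_s$ exists $P$-a.s.\ and $(M^{\sigma_n}_t-(M^{\sigma_n})_{-\infty})_{t\in\R}\in\mathcal{M}^2(\mcalF_{\bdot})$. I would then patch over $n$: as each $\sigma_n>-\infty$, for every $\omega$ one has $M^{\sigma_n}_s(\omega)=M_s(\omega)$ for all sufficiently negative $s$, so $M_{-\infty}:=\lim_{s\to-\infty}M_s$ exists $P$-a.s.\ and equals $(M^{\sigma_n})_{-\infty}$ for every $n$; this is the first half of (b). Consequently $(M-M_{-\infty})^{\sigma_n}=M^{\sigma_n}-(M^{\sigma_n})_{-\infty}\in\mathcal{M}^2(\mcalF_{\bdot})$ for all $n$, and since $(\sigma_n)_{n\geq1}$ is localizing, $(M_t-M_{-\infty})_{t\in\R}\in\mathcal{LM}^2(\mcalF_{\bdot})$, which is the second half of (b).

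I expect the main obstacle to be the central claim of the second paragraph, namely turning the merely \emph{local} integrability of $\langle M\rangle$ into \emph{genuine} square integrability of the stopped increments ${}^{s_0}\!(M^{\sigma_n})$ and, at the same time, the uniform-in-$s$ bound $\sup_{s\leq0}E[(\Ds M^{\sigma_n}_0)^2]\leq E[V^{\sigma_n}_0]$ that is needed to invoke Proposition~\ref{zx}. Once that is in place, the almost sure convergence at $-\infty$ and the patching over $n$ are routine, precisely because the $\sigma_n$ increase to $\infty$ and never take the value $-\infty$.
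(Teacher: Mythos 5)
Your proposal is correct and follows essentially the same route as the paper's own proof: for (a)$\Rightarrow$(b), localize via the sequence making $\langle M\rangle^{\sigma_n}$ integrable, deduce that the increments $\Ds(M^{\sigma_n})$ are genuine square integrable martingales with $E[(\Ds M^{\sigma_n}_0)^2]\leq E[\langle M\rangle^{\sigma_n}_0]$ uniformly in $s\leq 0$, apply Proposition~\ref{zx} (d)$\Rightarrow$(c) to $M^{\sigma_n}$, and patch over $n$; for (b)$\Rightarrow$(a), take the usual predictable quadratic variation of $(M_t-M_{-\infty})_{t\in\R}$ and use $\inl$-invariance. The only difference is that you spell out the standard fact about integrable predictable quadratic variation and the patching step, which the paper leaves implicit.
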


\begin{remark}   Let $M$ in
  $\mathcal{ILM}(\mcalF_{\bdot})$ have bounded jumps; then,    $M\in
  \mathcal{ILM}^2(\mcalF_{\bdot})$ as   well. In this case  (b) is satisfied
  if and only   if $M_{-\infty}:= \lim_{s\to -\infty}
  M_s$ exists $P$-a.s. Indeed,  if the limit exists 
   we define   
\begin{equation*}
\sigma_n =\inf\{t \in \R: |M_t-M_{-\infty}| >n\}. 
\end{equation*}
Then $(M_t^{\sigma_n}- M_{-\infty})_{t\in \R}$ is a bounded and  adapted
process  in
$\mathcal{ILM}(\mathcal{F}_{\bdot})$ and hence in
$\mathcal{IM}^2(\mathcal{F}_{\bdot})$. By Proposition \ref{zx},
$(M_t^{\sigma_n}- M_{-\infty})_{t\in \R}$ is in
$\mathcal{M}^2(\mcalF_{\bdot})$.  
\end{remark}

\begin{proof}  (a) implies (b):  Choose a localizing sequence $(\sigma_n)_{n\geq 1}$
such that  
\begin{equation*}
E[\langle M\rangle^{\sigma_n}_t ]< \infty, \quad \mbox{for all }
t\in \R \mbox{ and all } n\geq 1.
\end{equation*}
Since $\Ds \langle M\rangle^{\sigma_n} = \langle \Ds M^{\sigma_n}
\rangle$, it follows in particular that  
\begin{equation*}
E[\langle \Ds M^{\sigma_n}\rangle_t] \leq E[\langle
M\rangle^{\sigma_n}_t ]< \infty 
\end{equation*}
for all $s\leq t$ and $n$. Therefore, for all $s$ and $n$ we have  $\Ds
M^{\sigma_n} \in \mathcal{M}^2(\mcalF_{\bdot})$,  and
\begin{equation*}
 E[(\Ds M^{\sigma_n}_t)^2] \leq  E[\langle
M\rangle^{\sigma_n}_t] <\infty   
\end{equation*}
for all $s\leq t$.   Using Proposition \ref{zx}  on $M^{\sigma_n}$ it
follows that $M_{-\infty}:= \lim_{s\to -\infty} M_s^{\sigma_n}$ exists
$P$-a.s.\ (this limit does not depend on $n$) and $(M_t^{\sigma_n} -
M_{-\infty})_{t\in\R}$ is a square integrable martingale.  

(b) implies (a): Let $\langle M - M_{-\infty}\rangle $ denote the
predictable quadratic variation for  $(M_t-M_{-\infty})_{t\in \R}$ which
exists since this process is a locally square integrable
martingale. Since $M\inl (M_t-M_{-\infty})_{t\in \R}$,   $\langle M -
M_{-\infty}\rangle $ is a  predictable quadratic variation for $M$ as
well.

\end{proof}

We have seen that   a continuous increment local  martingale is a
local martingale if it converges
almost surely as time goes to $-\infty$.   A main  purpose of the  next examples  is to study the
behaviour at    $-\infty$ when this is not the case.

\begin{example} In (2) below we give an example of a continuous 
  increment local martingale which converges  to zero in probability as time
  goes to $-\infty$ without being  a local martingale. As a building
  block for this construction we first consider  a  simple
  example of a continuous local martingale which is nonzero only on a
  finite interval. 

(1)   Let
$B=(B_t)_{t\geq 0}$ denote a standard Brownian motion and  $\tau$
be  the first visit to zero after a visit to $k$, i.e.
\begin{equation}\label{kdef}
\tau= \inf\{t>0: B_t=0 \mbox{ and there is an } s<t \mbox{ such that }
  B_s>k\},
\end{equation}
where  $k>0$ is some fixed level. Then  $\tau$ is finite with
probability one, the stopped process $(B_{t\wedge \tau})_{t\geq 0}$ is
a square integrable martingale,  and $B_{t\wedge \tau}=0$ when $t\geq
\tau$. Let $a<b$ be real numbers and $\phi: [a,b) \to
 [0,\infty)$ be a surjective, continuous and strictly increasing
 mapping   and define $Y=(Y_t)_{t\in \R}$ as 
\begin{equation}\label{cv}
Y_t = \begin{cases}
       0 & \mbox{if } t<  a\\
       B_{\phi(t) \wedge \tau}  & \mbox{if } t\in [a, b)\\
       0  & \mbox{if } t\geq b. 
\end{cases}
\end{equation}
Note that $t\mapsto Y_t $ is continuous $P$-a.s.\ and that with
probability one $Y_t=0$ for $t\not\in [a,b]$. Define, with
$\mathcal{N}$ denoting the $P$-null sets,  
\begin{equation}\label{hj}
\mcalF_t=  \sigma(B_u: u\leq \phi(t) ) \vee \mathcal{N} \quad \mbox{
  for } t\in \R, 
\end{equation}
where we let $\phi(t)=0$ for $t\leq a$ and $\phi(t)= \infty$ for
$t\geq b$.  Interestingly,  $Y$ is  
a local martingale. To see this,  define the "canonical"\   localizing
sequence $(\sigma_n)_{n\geq 1}$ as $\sigma_n =\inf\{t\in \R: |Y_t|
>n\}$. 
Since $(Y^{\sigma_n}_t)_{t\in [a,b)}$ is  a deterministic time change of
$(B_{t\wedge \tau})_{t\geq 0}$ stopped at $\sigma_n$, it  is a
bounded, and hence uniformly integrable,  martingale. By continuity of
the paths and the property $Y^{\sigma_n}_t = Y^{\sigma_n}_b$ for
$t\geq b$  it thus follows that  $(Y^{\sigma_n}_t)_{t\in \R}$ is a
bounded martingale.

(2) For $n=1,2, \ldots$ let $B^n=(B^n_t)_{t\geq 0}$ denote independent
standard Brownian motions, and define $Y^n=(Y^n_t)_{t\in \R}$ as in
\eqref{cv} with $a=-n$ and $b=-n+1$, and $Y$ resp.\ $B$ replaced by
$Y^n$ resp.\ $B^n$.  Let $(\mathcal{F}_t^n)_{t\in \R}$ be the
corresponding filtration defined as in \eqref{hj}, and $(\theta_n)_{n
  \geq 1}$ denote a sequence of independent Bernoulli variables that
are independent of the Brownian motions as well and satisfy
$P(\theta_n=1)= 1- P(\theta_n=\nobreak 0)= \tfrac{1}{n}$ for all $n$. Let
$X_t^n=\theta_nY^n_t$ for $t\in \R$.

Define $X_t =\sum_{n=1}^\infty X_t^n$ for $t\in \R$, which is
well-defined since $X_t^n=0$ for $t\not\in [-n,-n+1]$, and set
$\mcalF_t = \vee_{n=1}^\infty (\mathcal{F}_t^n \vee \sigma(\theta_n))$
for $t\in \R$. For $s\in [-n,-n+1]$ and $n=1,2, \ldots$, $\Ds X_t =
\sum_{m=1}^n \Ds X_t^m$, and since it is easily seen that each
$(X_t^m)_{t\in \R}$ is a local martingale with respect to
$(\mcalF_t)_{t\in \R}$, it follows that $\Ds X$ is a local martingale
as well; that is, $X$ is an increment local martingale.  By
Borel-Cantelli, infinitely many of the $\theta_n$'s are $1$ $P$-a.s.,
implying that $X_s$ does not converge $P$-a.s.\ as $s\to -\infty$. On
the other hand, $P(X_t=0)\geq \tfrac{n-1}{n}$ for $t\in [-n, -n+1]$,
which means that $X_s\to 0$ in probability as $s\to -\infty$.

From  \eqref{k1} it follows that  if a process in
$\mathcal{IM}(\mcalF_{\bdot})$ is adapted and integrable then it is in
$\mathcal{M}(\mcalF_{\bdot})$. By the above  there is no such result
for $\mathcal{ILM}(\mcalF_{\bdot})$; indeed, $X$ is both adapted and
$p$-integrable for all $p>0$ but it is not in
$\mathcal{LM}(\mcalF_{\bdot})$.    

\end{example}

\begin{example}\label{ivba}

Let $X=(X_t)_{t\geq 0}$ denote the inverse of  $\textrm{BES}(3)$,
the three-dimensional Bessel process. It is well-known (see e.g.\  Rogers and
Williams (2000))
that $X$ is a diffusion on natural scale  and hence for  all $s>0$ 
the increment process 
$(\Ds X_t)_{t\geq 0}$ is a
local martingale. That is,  we may consider $X$ as an increment  
  martingale indexed  by 
$[0,\infty)$.   By Rogers and Williams (2000),
$\infty$ is   an  entrance boundary, which  means    that  if the
process is started in   $\infty$, it
immediately leaves this state and  never returns. 
Since we can obviously stretch $(0, \infty)$ into $\R$,  this shows that
there are  interesting examples of continuous  increment local martingales
$(X_t)_{t\in \R}$ for which $\lim_{t\to -\infty} X_t = \pm \infty$
almost surely.

\end{example}

Using the Dambis-Dubins-Schwartz theorem it follows easily that any
continuous local martingale indexed by~$\R$ is a time change of a
Brownian motion indexed by~$\R_+$. It is not clear to us whether there
is some analogue of this result for continuous increment local
martingales but there are indications that this it not the case;
indeed, above we saw that a continuous increment local martingale may
converge to $\infty$ as time goes to $-\infty$; in particular this
limiting behaviour does not resemble that of a Brownian motion indexed
by $\R_+$ as time goes to $0$ or of a Brownian motion indexed by $\R$
as time goes to $-\infty$.

Let $M\in \mathcal{LM}(\mcalF_{\bdot})$. It is well-known that  $M$ can be decomposed
uniquely up to $P$-indistinguishability as $M_t = M_{-\infty} +M_t^c + 
M_t^d$ where $M^c=(M_t^c)_{t\in \R}$,  the \textit{continuous
  part} of $M$,  is a continuous local martingale with
$M_{-\infty}=0$, and $M^d$, the \textit{purely discontinuous part} of
$M$,  is a purely discontinuous local martingale with $M^d_{-\infty}=0$,
which means that $M^dN$ is a local martingale for all continuous local
martingales $N$. Note that for $s\in \R$, 
\begin{equation} \label{rrt}
(\Ds M)^c = \Ds (M^c) \quad \mbox{ and } \quad  (\Ds M)^d = \Ds (M^d). 
\end{equation}
We need a further decomposition of $M^d$ so   let $\mu^M=\{\mu^M(\omega;
\dd t, \dd x):\omega \in \Omega\}$ denote the random  measure on
$\R\times (\R\setminus\{0\})$ induced by the  jumps of $M$; that is, 
\begin{equation*}
\mu^M(\omega; \dd t , \dd x)= \sum_{s\in \R} \delta_{(s, \Delta
  M_s(\omega))}(\dd t, \dd x), 
\end{equation*}  
and let  $\nu^M=\{\nu^M(\omega;
\dd t, \dd x):\omega \in \Omega\}$ denote the compensator  of $\mu^M$
in the sense of Jacod and Shiryaev (2003), II.1.8.  From Proposition
II.2.29 and  Corollary II.2.38 
in Jacod and Shiryaev (2003) it follows that   $(|x| \wedge |x|^2)
*\nu^M \in \mathcal{LA}_0^1(\mcalF_{\bdot})$  
and $M^d \Peq  x*(\mu^M-\nu^M)$, implying that   
  for arbitrary $\epsilon >0$, $M$ can be decomposed as  
\begin{align*}
M_t &= M_{-\infty} + M_t^c +M_t^d = M_{-\infty} + M_t^c + x* (\mu^M -
\nu^M)_t\\
    &=  M_{-\infty}+  M_t^c +(x1_{\{|x|\leq \epsilon\}}) *(\mu^M -
    \nu^M)_t +(x1_{\{|x|>  \epsilon\}}) *\mu^M_t -(x1_{\{|x|>
      \epsilon\}}) *\nu^M_t   .
\end{align*}
Recall that when $M$ is quasi-left continuous we have 
\begin{equation}\label{rty}
\nu^M(\cdot; \{t\} \times (\R\setminus \{0\})) = 0 \quad \mbox{for all
} t\in \R\ P\mbox{-a.s.} 
\end{equation}
Finally, for $s\in \R$, $\mu^{\Ds M}(\cdot; \dd t, \dd x) =1_{(s,
  \infty)}(\dd t) \mu^M(\cdot; \dd t , \dd x)$ and thus 
\begin{equation}\label{ion}
\nu^{\Ds M}(\cdot; \dd t, \dd x) =1_{(s,
  \infty)}(\dd t) \nu^M(\cdot; \dd t , \dd x).
\end{equation}

Now consider the case $M\in \mathcal{ILM}(\mcalF_{\bdot})$. Denote the
continuous resp.\ purely discontinuous part of $\Ds M$ by $\Ds M^c$
resp.\ $\Ds M^d$. By \eqref{rrt}, $\{\Ds M^c\}_{s\in \R}$ and $\{\Ds
M^d\}_{s\in \R}$ are consistent families of increment processes, and
$M$ is associated with $\{\Ds M^c +\nobreak \Ds M^d\}_{s\in \R}$. Thus, there
exist two processes, which we call the \textit{continuous} resp.\
\textit{purely discontinuous} part of $M$, and denote $M^{c\gen}$ and
$M^{d\gen}$, such that $M^{c\gen}$ is associated with $\{\Ds
M^c\}_{s\in \R}$ and $M^{d\gen} $ is associated with $\{\Ds
M^d\}_{s\in \R}$, and
\begin{equation} \label{psx}
M_t = M_t^{c\gen} + M_t^{d\gen}\quad \mbox{for all } t\in\R,\ 
P\mbox{-a.s.}
\end{equation}
Once again these processes are unique only up to addition of random
variables.  In view of \eqref{ion} we define the compensator of
$\mu^M$, to be denoted $\{\nu^M(\omega;\dd t , \dd x):\nobreak \omega
\in\nobreak \Omega\}$, as the random measure on $\R\times
(\R\setminus\{0\})$ satisfying that for all $s\in \R$, 
\[
1_{(s, \infty)
}(\dd t) \nu(\omega; \dd t, \dd x)=\nu^{\Ds M} (\omega; \dd t, \dd
x),
\]
where, noticing that $\Ds M$ is a local martingale, the right-hand
side is the compensator of $\mu^{\Ds M}$ in the sense of Jacod and
Shiryaev (2003), II.1.8.

\begin{theorem} Let $M\in \mathcal{ILM}(\mcalF_{\bdot})$. \label{dbb}
\begin{itemize}
\item[(1)] The quadratic variation $[M]$   for $M$ exists  if and only
  if there
  is a continuous martingale component $M^{c\gen}$ with $M^{c\gen}  \in
  \mathcal{LM}(\mcalF_{\bdot})$ and $M_{-\infty}^{c\gen} =0$, and for all $t\in
  \R$,  $\sum_{s\leq t} (\Delta M_s)^2 <\infty$ $P$-a.s. In this case 
\begin{equation*}
[M]_t = \langle M^{c\gen}\rangle_t + \sum_{s\leq t} (\Delta M_s)^2 .
\end{equation*}
\item[(2)] We have that $M_{-\infty}:= \lim_{s\to -\infty} M_s$ exists
  $P$-a.s.\ and   $(M_t - M_{-\infty})_{t\in
    \R}\in   \mathcal{LM}(\mcalF_{\bdot})$ if
  and only if the quadratic variation $[M]$   for $M$ exists and
  $[M]^{\frac{1}{2}} \in \mathcal{LA}_0^1(\mcalF_{\bdot})$. 
\item[(3)] Assume \eqref{rty} is satisfied and there is an $\epsilon>0$
 such that 
\begin{equation} \label{iii}
\lim_{s\to -\infty}\int_{(s, 0]}\int_{|x|>\epsilon} x \nu^M(\cdot;
\dd u, \dd x)
\end{equation}
exists  $P$-a.s.  Then, 
$\lim_{s\to -\infty} M_s$ exists $P$-a.s.\ if     and only if $[M]$ exists.  
\end{itemize}
\end{theorem}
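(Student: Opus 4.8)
The plan is to fix the $\epsilon$ furnished by the hypothesis and to decompose $M$, up to addition of a random variable, as $M = M^{c\gen} + S + J - C$, where $M^{c\gen}$ is the continuous part, $S=(x1_{\{|x|\le\epsilon\}})*(\mu^M-\nu^M)$ is the compensated small-jump part, $J=(x1_{\{|x|>\epsilon\}})*\mu^M$ is the finite-variation sum of large jumps, and $C=(x1_{\{|x|>\epsilon\}})*\nu^M$. Existence of these increment processes is exactly the content of the discussion preceding the theorem. By quasi-left continuity \eqref{rty} both $M^{c\gen}$ and $C$ are continuous, so the jumps split cleanly as $\Delta M_t=\Delta S_t+\Delta J_t$ with $\Delta S_t=\Delta M_t 1_{\{|\Delta M_t|\le\epsilon\}}$ and $\Delta J_t=\Delta M_t 1_{\{|\Delta M_t|>\epsilon\}}$. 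In particular $S$, and hence $\tilde M:=M^{c\gen}+S$, has jumps bounded by $\epsilon$, so $\tilde M\in\mathcal{ILM}^2$; and the hypothesis \eqref{iii} says precisely that the increment $\Ds C_0$ converges as $s\to-\infty$, i.e.\ $C$ converges at $-\infty$. Thus the whole question is reduced to the convergence of $M^{c\gen}+S+J$.

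The first thing I would observe is that ``finitely many jumps of size $>\epsilon$ as $s\to-\infty$'' is the common link between the two conditions. If $\lim_{s\to-\infty}M_s$ exists, then $M^{c\gen}+S+J$ converges, and since this process has the same jumps as $M$ its large jumps must tend to $0$, forcing only finitely many to exceed $\epsilon$; hence $J$ converges and $\sum_{s\le t}(\Delta M_s)^2 1_{\{|\Delta M_s|>\epsilon\}}<\infty$. Conversely, if $[M]$ exists then $\sum_{s\le t}(\Delta M_s)^2<\infty$ by Theorem~\ref{dbb}(1), and since each large jump contributes at least $\epsilon^2$ there are again only finitely many, so $J$ converges. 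In either case $J$ and $C$ drop out, and the problem reduces to showing that the bounded-jump process $\tilde M$ converges at $-\infty$ if and only if $[\tilde M]$ exists, after which the pieces are reassembled through Theorem~\ref{dbb}(1).

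For ``convergence $\Rightarrow[M]$'': once $\tilde M$ converges, the Remark following Theorem~\ref{vbb} (bounded jumps) gives $\tilde M-\tilde M_{-\infty}\in\mathcal{LM}^2$, so $[\tilde M]$ exists; applying Theorem~\ref{dbb}(1) to $\tilde M$ then yields $M^{c\gen}\in\mathcal{LM}$ with $M^{c\gen}_{-\infty}=0$ together with $\sum_{s\le t}(\Delta M_s)^2 1_{\{|\Delta M_s|\le\epsilon\}}<\infty$, and adding the finitely many large jumps gives $\sum_{s\le t}(\Delta M_s)^2<\infty$; a final appeal to Theorem~\ref{dbb}(1) produces $[M]$. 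For the converse ``$[M]\Rightarrow$ convergence'': Theorem~\ref{dbb}(1) already supplies $M^{c\gen}\to 0$ and $\sum_{s\le t}(\Delta M_s)^2<\infty$, whence, by the same criterion applied to $\tilde M$, the variation $[\tilde M]$ exists; I would then invoke Theorem~\ref{dbb}(2) on $\tilde M$, for which it remains only to verify $[\tilde M]^{1/2}\in\mathcal{LA}_0^1$.

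This last verification is the step I expect to be the main obstacle. The key estimate is $\Delta([\tilde M]^{1/2})_t\le(\Delta[\tilde M]_t)^{1/2}=|\Delta\tilde M_t|\le\epsilon$, so $[\tilde M]^{1/2}$ has jumps bounded by $\epsilon$; since $[\tilde M]\in\mathcal{A}_0$ forces $[\tilde M]^{1/2}_t\to 0$ as $t\to-\infty$, the canonical localizing sequence $\rho_n=\inf\{t:[\tilde M]^{1/2}_t>n\}$ gives $[\tilde M]^{1/2}_{t\wedge\rho_n}\le n+\epsilon$, hence local integrability and $[\tilde M]^{1/2}\in\mathcal{LA}_0^1$. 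Theorem~\ref{dbb}(2) then yields convergence of $\tilde M$, and combining with the convergence of $J$ and $C$ gives $\lim_{s\to-\infty}M_s$. Throughout I would keep in mind that $M^{c\gen},S,J,C$ are defined only up to additive random variables, but every property used, namely convergence at $-\infty$, existence of $[\,\cdot\,]$, and membership in the localized classes, is invariant under such additions, so the ambiguity is harmless.
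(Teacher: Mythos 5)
Your proposal addresses only part (3) of the theorem. Parts (1) and (2) are never proved; they are invoked as external results (the jump-sum/continuous-component criterion of (1), the $[\,\cdot\,]^{1/2}\in\mathcal{LA}_0^1$ criterion of (2), plus the bounded-jumps remark following Theorem~\ref{vbb}). Since (1) and (2) do not depend on (3), there is no circularity --- indeed this is exactly how the paper's own proof of (3) is organized --- but as a proof of the full statement it is incomplete: the paper's proof of (1) (the computation identifying $\Dsu [M]^\gen_t$ with $\sum_{u\in(s,t]}(\Delta M_u)^2+\langle M^{c\gen}\rangle^\gen_t-\langle M^{c\gen}\rangle^\gen_s$, followed by the limit argument via Theorem~\ref{vbb}) and of (2) (Davis' inequality giving uniform integrability of $(\Ds M_0^{\sigma_n})_{s<0}$, then Proposition~\ref{zx}) are substantive steps that would have to be supplied.

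For part (3) itself your argument is correct and is essentially the paper's: the same four-term decomposition $M\Peq M^{c\gen}+X+Y-Z$ (your $S$, $J$, $C$), the same use of \eqref{rty} to make the compensator term continuous and to identify $\Delta S_t=\Delta M_t 1_{\{|\Delta M_t|\le \epsilon\}}$, the hypothesis \eqref{iii} to make it converge, the observation that finiteness of $\sum_{s\le t}(\Delta M_s)^2$ (respectively convergence of $M$ at $-\infty$) forces only finitely many jumps exceeding $\epsilon$ so that the large-jump term is eventually constant, and then an appeal to (1), (2) and the bounded-jumps remark. Two minor deviations, both harmless and arguably improvements: you bundle $M^{c\gen}+S$ into a single bounded-jump process $\tilde M$, whereas the paper treats $M^{c\gen}$ (via Theorem~\ref{vbb}) and $X$ (via part (2)) separately; and you make explicit why $[\tilde M]^{1/2}\in\mathcal{LA}_0^1$ follows from bounded jumps --- the estimate $\Delta([\tilde M]^{1/2})_t\le(\Delta[\tilde M]_t)^{1/2}=|\Delta\tilde M_t|\le\epsilon$ together with the first-passage localizing sequence --- a point the paper leaves implicit in the phrase ``it follows from (2)''. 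Your closing remark that all properties used are invariant under addition of a random variable is exactly the right way to handle the non-uniqueness of $M^{c\gen}$, $S$, $J$, $C$.
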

Note that the conditions in (3) are satisfied if   $\nu^M$ can be decomposed as
$\nu^M(\cdot; \dd t \times \dd x)=
F(\cdot;t,  \dd x)\, \mu(\dd t)  $ where $F(\cdot;t,  \dd x)$ is a symmetric
measure for all   $t\in \R$ and $\mu$ does not have positive point masses. 

\begin{proof} (1)  For  $s\leq t$ we have 
\begin{align} \nonumber 
\Dsu  [M]^\gen_t = [\Ds M]_t &= \sum_{u: s< u\leq t} (\Delta M_u)^2 +
\langle \Ds M^c \rangle_t  \nonumber         \\ 
&= \sum_{u: s< u\leq t} (\Delta M_u)^2 + \langle\Ds (  M^{c\gen})
 \rangle_t  \nonumber  \\
 &= \sum_{u: s< u\leq t} (\Delta M_u)^2 +\Ds \langle  M^{c\gen}
 \rangle_t^{\gen}   \nonumber \\
 &=  \sum_{u: s< u\leq t} (\Delta M_u)^2 
 + \langle  M^{c\gen}
 \rangle_t^{\gen} - \langle  M^{c\gen}
 \rangle_s^{\gen}, \label{vvvv}
\end{align}
where the first equality is due to the fact that $[M]^\gen$ is
associated with $\{[\Ds M]\}_{s\in \R}$, the second is a well-known
decomposition of the quadratic variation of a local martingale,  the third
equality is due to $M^{c\gen}$ being associated with $\{\Ds
M^c\}_{s\in \R}$ and the fourth is due to $\langle
M^{c\gen}\rangle^\gen $
being  associated with
$\{\langle \Ds M^{c\gen}\rangle\}_{s\in \R}$. 
By Remark \ref{lk} (1), the quadratic variation $[M]$ exists if and only
if $[M]_s^\gen$ converges $P$-a.s.\ as $s\to -\infty$, which, by the
above, is equivalent to convergence almost surely of both terms in
\eqref{vvvv}.  By Theorem \ref{vbb}, $\langle M^{c\gen}
\rangle_s^\gen$ converges $P$-a.s.\ as $s\to -\infty$ if and only if $M_{-\infty}^{c\gen}$
exists $P$-a.s.\ and $(M^{c\gen}_t - M_{-\infty}^{c\gen})_{t\in \R}$
is a continuous local martingale. If the quadratic variation exists,
we may replace  $M^{c\gen}$ by
$(M^{c\gen}_t - M_{-\infty}^{c\gen})_{t\in \R}$ and  $M^{d\gen}$ by 
$(M^{d\gen}_t + M_{-\infty}^{c\gen})_{t\in \R}$, thus obtaining a
continuous part of $M$ which starts at $0$.

(2) First assume that $M_{-\infty}$ exists  and
$(M_t-M_{-\infty})_{t\in
  \R}\in \mathcal{LM}(\mcalF_{\bdot})$. Since $M\inl (M_t -
M_{-\infty})_{t\in \R}$,  the quadratic variation for  $M$ exists and equals
the quadratic variation for  $(M_t-M_{-\infty})_{t\in
  \R}$. It is well-known that since the latter is a local martingale,
$[M]^{\frac{1}{2}} \in \mathcal{LA}_0^1(\mcalF_{\bdot})$. 

Conversely assume that  $[M]$  exists and  $[M]^{\frac{1}{2}} \in
\mathcal{LA}_0^1(\mcalF_{\bdot})$. Choose a localizing sequence
$(\sigma_n)_{n\geq 1}$ such that $[M^{\sigma_n}]^{\frac{1}{2}}  \in
\mathcal{A}_0^1(\mcalF_{\bdot})$. Since   $\Dsu  [M^{\sigma_n}]_0 \leq
[M^{\sigma_n}]_0 $ if follows from Davis' inequality that for some
constant $c>0$,  
\begin{equation*}
E[\sup_{u: s\leq u\leq 0} |\Ds M_0^{\sigma_n}| ] \leq c E[
[M^{\sigma_n}]_0^{\frac{1}{2}}] <\infty 
\end{equation*}
for all $s\leq 0$,  implying that $(\Ds M_0^{\sigma_n} )_{s<0}$ is
uniformly integrable.  The result now follows from Proposition~\ref{zx}.

(3) By \eqref{ion}, the three families of increment processes $\{
(x1_{\{|x| \leq \epsilon\}}) *(\mu^{\Ds M} -\nobreak \nu^{\Ds
  M}\}_{s\in \R}$, $\{ (x1_{\{|x|> \epsilon\}}) * \mu^{\Ds M} \}_{s\in
  \R}$ and $\{ (x1_{\{|x|> \epsilon\}}) * \nu^{\Ds M} \}_{s\in \R}$
are all consistent.  Choose $X=(X_t)_{t\in \R}$, $Y=(Y_t)_{t\in \R}$
and $Z=(Z_t)_{t\in \R}$ associated with these families such that $X_t
+Y_t -Z_t = M_t^{d\gen}$; in particular we then have $ M \Peq
M^{c\gen} + X +Y -Z $.  Since $Z$ is associated with $\{ (x1_{\{|x|>
  \epsilon\}}) * \nu^{\Ds M} \}_{s\in \R}$ we have
\begin{equation*}
Z_{0} -
Z_{s}   =\int_{s}^0\int_{|x|>\epsilon}   x\,      \nu^{M} (\cdot;
\dd u,      \dd x) \quad \mbox{for all } s\in \R \mbox{ with
  probability one,} 
\end{equation*}
 implying that $s\mapsto Z_s$ is continuous by \eqref{rty} and
$\lim_{s\to -\infty} Z_s$ exists $P$-a.s.\ by \eqref{iii}.  By
\eqref{rty} it also follows that 
$
(\Delta X_s)_{s\in \R} \Peq
(\Delta M_s 1_{ \{|\Delta M_s|\leq \epsilon\}})_{s\in \R} 
$,
implying that $X$ is an  increment local  martingale with jumps bounded by
$\epsilon$ in absolute value and 
\begin{equation} \label{tyv}
\sum_{s:s\leq t } (\Delta M_s)^2 = \sum_{s:s\leq t} ( \Delta  X_s)^2 +
\sum_{s:s\leq t}(\Delta Y_s)^2\quad \mbox{for all } t\in \R \mbox{
  with probability one.}
\end{equation}

If $[M]$ exists then by (1) $M^{c\gen}_{-\infty}$ exists $P$-a.s.\ and
\eqref{tyv} is finite for all $t$ with probability one. Since $Y$ is
piecewise constant with jumps of magnitude at least~$\epsilon$, it
follows that $Y_s$ is constant when $s$ is small enough almost surely.
In addition, since the quadratic variation of the increment local
martingale $X$ exists and $X$ has bounded jumps it follows from (2)
that, up to addition of a random variable, $X$ is a local martingale
and thus $\lim_{s\to -\infty} X_s$ exists as well; that is,
$\lim_{s\to -\infty} M_s$ exists $P$-a.s.

If,  conversely, $\lim_{s\to -\infty} M_s$ exists $P$-a.s.,   there
are no jumps of magnitude at least $\epsilon$ in $M$ when $s$ is small
enough; thus there are no jumps in $Y_s$ when $s$ is sufficiently
small  $P$-a.s., implying that
$\lim_{s\to -\infty} (M_s^{c\gen} +X_s)$ exists $P$-a.s. Combining
Theorem~\ref{vbb}, \eqref{tyv}  and (1) it follows  that $[M]$ exists.  
\end{proof}

\section{Stochastic integration} 

In the following we define a stochastic integral with respect to an
increment local martingale. 
Let  $M\in \mathcal{LM}(\mcalF_{\bdot})$ and  set 
\begin{align*}
&\mathcal{L}L^1(M)\\ \quad 
&:= \{\phi= (\phi_t)_{t\in \R}: \phi \mbox{ is predictable and } 
\Big(\big(\int_{(-\infty, t]} \phi^2_s \, \dd[M]_s\big)^{\tfrac{1}{2}}\Big)_{t\in \R}  \in \mathcal{LA}_0^1(\mcalF_{\bdot}) \}.
\end{align*}
Since in this case   the index set  set can be taken to be $[-\infty, \infty)$, it is
well-known, e.g.\ from Jacod (1979), 
that the stochastic integral of $\phi\in \mathcal{L}L^1(M)$ with
respect to $M$,  which we denote $
(\int_{(-\infty, t]} \phi_s    \, \dd M_s)_{t\in \R}$ or
$\phi\bullet M= (\phi\bullet M_t)_{t\in \R}$,  does exist. All
fundamental properties of the integral   are well-known so let us just
explicitly mention the following two results that we are going to use
in the following:   
For $\sigma$ a stopping time,  $s\in \R$ and $\phi \in
    \mathcal{L}L^1(M)$   we have 
\begin{equation}\label{nimm1}
(\phi \bullet M)^\sigma \Peq  (\phi 1_{(-\infty, \sigma]})\bullet
M \Peq 
\phi \bullet (M^\sigma) 
\end{equation}
and 
\begin{equation}\label{nimm2}
\Ds(\phi \bullet M) \Peq  \phi \bullet (\Ds M)\Peq  (\phi 
1_{(s, \infty)}) \bullet M.
\end{equation}

Next we define and study a \textit{stochastic increment integral} with
respect an increment    local  martingale.  
For   $M\in \mathcal{ILM}(\mcalF_{\bdot})$  set 
\begin{align*}
\mathcal{L}L^1(M)&:= \{\phi: \phi \mbox{ is predictable and } 
\Big(\big(\int_{(-\infty, t]} \phi^2_s \,
\dd[M]_s^\gen\big)^{\tfrac{1}{2}}\Big)_{t\in \R}  \in
\mathcal{LA}_0^1(\mcalF_{\bdot}) \} \\
\mathcal{IL}L^1(M)&:= \{\phi: 
 \phi \in \mathcal{L}L^1(\Ds M) \mbox{ for all } s\in \R\}.
\end{align*}
As an example, if $M\in \mathcal{ILM}^2(\mathcal{F}_{\bdot})$ then a predictable
$\phi$ is in $\mathcal{L}L^1(M)$ resp.\ in  $\mathcal{IL}L^1(M)$ if
(but in general not only if) 
$\int_{(-\infty,t]} \phi_s^2\, \dd \langle M\rangle_s^\gen  <\infty$ for
all $t\in \R$ $P$-a.s.\ resp.\ $\int_{(s,t]} \phi_u^2\, \dd \langle
M\rangle_u^\gen  <\infty$ for all $s<t$ $P$-a.s. If  $M\in
\mathcal{ILM}^2(\mathcal{F}_{\bdot})$ is continuous then 
\begin{align*}
\mathcal{L}L^1(M)&=\{\phi: \phi\mbox{ is predictable and }
\int_{(-\infty, t]}\phi_s^2\, \dd\langle M\rangle^\gen_s <\infty \
P\mbox{-}a.s. \mbox{ for all } t\} \\ 
\mathcal{IL}L^1(M)&=\{\phi: \phi\mbox{ is predictable and }
\int_{(s, t]}\phi_u^2\, \dd\langle M\rangle^\gen_u <\infty \
P\mbox{-}a.s. \mbox{ for all } s<t\}. 
\end{align*} 

Let    $M\in \mathcal{ILM}(\mcalF_{\bdot})$.  The stochastic integral $\phi \bullet (\Ds M)$
 of  $\phi$ in  $\mathcal{IL}L^1(M)$ exists  for all $s\in \R$; in addition, $\{\phi
\bullet (\Ds M)\}_{s\in \R}$ is a consistent family of increment
processes. Indeed,  for $s\leq t\leq u$ we must verify 
\begin{equation*}
(\phi \bullet (\Ds M))_u = (\phi \bullet (\Ds M))_t +
(\phi \bullet (\Dt M))_u, \quad P\mbox{-a.s.}
\end{equation*}
or equivalently 
\begin{equation*}
\Dt(\phi\bullet (\Ds M))_u = (\phi \bullet (\Dt M))_u \quad
P\mbox{-a.s.}, 
\end{equation*} 
which follows from \eqref{kl} and \eqref{nimm2}. Based on this, we
define the \textit{stochastic increment integral of $\phi$ with
  respect to $M$}, to be denoted $\phi \inbull M$, as a \ca\ process
associated with the the family $\{\phi \bullet (\Ds M)\}_{s\in \R}$.
Note that the increment integral $\phi \inbull M$ is uniquely
determined only up to addition of a random variable and it is an
increment local martingale. For $s<t$ and $\phi \in
\mathcal{IL}L^1(M)$ we think of $\phi \inbull M_t - \phi\inbull M_s$
as the integral of $\phi$ with respect to $M$ over the interval
$(s,t]$ and hence use the notation
\begin{equation}
\int_{(s,t]} \phi_u\, \dd M_u:= \phi \inbull M_t - \phi\inbull M_s\
\quad \mbox{for } s<t. 
\end{equation}
When  $\phi\inbull M_{-\infty}:= \lim_{s\to -\infty}
\phi\inbull M_s$ exists $P$-a.s.\ we define the \textit{improper
integral of $\phi$ with respect to $M$ from $-\infty$ to $t$} for
$t\in \R$  as 
\begin{equation}
\int_{(-\infty,t]} \phi_u\, \dd M_u := \phi\inbull M_t -
\phi\inbull M_{-\infty}.
\end{equation} 
Put differently, the improper integral $(\int_{(-\infty,t]} \phi_u\,
\dd M_u)_{t\in \R}$ is, when it exists,   the unique,  up to
$P$-indistinguishability,  increment integral of $\phi$ with respect to
$M$ which is $0$ in  $-\infty$. Moreover, it  is an adapted process. 

The following summarises some fundamental properties. 

\begin{theorem}\label{inc} Let $M\in \mathcal{ILM}(\mcalF_{\bdot})$. 
\begin{enumerate}
\item[(1)] Whenever  $\phi \in \mathcal{IL}L^1(M)$ and $s<t$ we have
  $\Ds (\phi\inbull M)_t = (\phi\bullet (\Ds M))_t$ $P$-a.s.  
\item[(2)]  $\phi\inbull M \in \mathcal{ILM}(\mcalF_{\bdot})$ for  all $\phi \in
    \mathcal{IL}L^1(M)$. 
\item[(3)] If  $\phi, \psi  \in   \mathcal{IL}L^1(M) $  and $a, b \in
  \R$ then  $
(a\phi + b\psi)\inbull M \inl a (\phi \inbull M) + b(\psi  \inbull  M)$. 
\item[(4)] For  $\phi \in  \mathcal{IL}L^1(M)$  we have 
\begin{align}\label{h1}
&\Delta \phi \inbull M_t = \phi_t \Delta M_t, \quad   \mbox{ for } t\in
    \R,\ P\mbox{-a.s.}\\ \label{h2}
&\Dsu [\phi \inbull  M]_t^{\gen}   = \int_{(s,  t]} \phi_u^2 \, \dd
  [M]_s^{\gen}\quad  
    \mbox{for } s \leq t \  P\mbox{-a.s.} 
\end{align}
In particular $[\phi\inbull M]$ exists if and only if $\int_{(-\infty,
  t]}\phi_s^2\, \dd [M]^\gen_s <\infty$ for all $t\in \R$ $P$-a.s.
\item[(5)] If  $\sigma$ a stopping time  and $\phi \in   \mathcal{IL}L^1(M) $
     then  
\begin{equation*}
(\phi \inbull  M)^\sigma \inl (\phi 1_{(-\infty, \sigma]})\inbull  M \inl 
\phi \inbull  (M^\sigma).  
\end{equation*}
\item[(6)] Let  $\phi \in \mathcal{IL}L^1(M)$ and $\psi=(\psi_t)_{t\in
    \R}$ be predictable. Then $\psi\in   \mathcal{IL}L^1(\phi
  \inbull M)$ if and only if $\phi\psi \in \mathcal{IL}L^1(M)$,  and
  in this case $\psi\inbull (\phi \inbull M) \inl (\psi\phi)\inbull
  M$. 
\item[(7)] Let  $\phi \in \mathcal{IL}L^1(M)$. Then $\phi\inbull
  M_{-\infty}:= \lim_{s\to -\infty} \phi\inbull M_s$ exists
  $P$-a.s.\ and $(\int_{(-\infty,t]} \phi_u\, \dd M_u)_{t\in \R}
  \in \mathcal{LM}(\mcalF_{\bdot})$ if and only if $\phi \in
  \mathcal{L}L^1(M)$. 
\end{enumerate}
\end{theorem}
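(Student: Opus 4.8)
The plan is to reduce every assertion about the increment integral $\phi\inbull M$ to the corresponding, well-known assertion about the ordinary stochastic integral $\phi\bullet(\Ds M)$ against the local martingale $\Ds M$, using throughout that $\phi\inbull M$ is by construction associated with the consistent family $\{\phi\bullet(\Ds M)\}_{s\in\R}$. Statement (1) is then essentially the definition of ``associated with'': for $s<t$ one has $\Ds(\phi\inbull M)_t=(\phi\bullet(\Ds M))_t$ $P$-a.s. Since each $\phi\bullet(\Ds M)$ is an adapted local martingale---being the integral of a process in $\mathcal{L}L^1(\Ds M)$ against $\Ds M\in\mathcal{LM}(\mcalF_{\bdot})$---statement (1) yields (2) at once.

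For (3), (5) and (6) I would establish the asserted $\inl$-identities by showing both sides have identical increments, reducing each to a standard property of the ordinary integral. Thus (3) follows from linearity of $\phi\mapsto\phi\bullet(\Ds M)$, once one checks that $\mathcal{IL}L^1(M)$ is closed under linear combinations (triangle inequality for the $L^2([M]^\gen)$-seminorm together with the cone property of $\mathcal{LA}_0^1$). Statement (5) combines \eqref{go} with \eqref{nimm1}, giving $\Ds((\phi\inbull M)^\sigma)\Peq(\phi\bullet(\Ds M))^\sigma\Peq(\phi 1_{(-\infty,\sigma]})\bullet(\Ds M)$ and, symmetrically, $\Ds(\phi\inbull(M^\sigma))\Peq\phi\bullet((\Ds M)^\sigma)\Peq(\phi\bullet(\Ds M))^\sigma$. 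Statement (6) rests on the associativity $\psi\bullet(\phi\bullet(\Ds M))\Peq(\psi\phi)\bullet(\Ds M)$ of ordinary integrals, the membership equivalence following from $\dd[\phi\bullet N]=\phi^2\,\dd[N]$, which (via (1)) recasts $\psi\in\mathcal{L}L^1(\phi\bullet(\Ds M))$ as $\phi\psi\in\mathcal{L}L^1(\Ds M)$.

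For (4), the jump formula \eqref{h1} comes from $\Delta M_t=\Delta(\Ds M)_t$ for $s<t$ and the jump rule for ordinary integrals; the quadratic-variation formula \eqref{h2} comes from \eqref{bx}, which gives $\Ds([\phi\inbull M]^\gen)\Peq[\phi\bullet(\Ds M)]=\int_{(-\infty,\cdot]}\phi^2\,\dd[\Ds M]$, where $[\Ds M]\Peq\Ds([M]^\gen)$ is supported on $(s,\infty)$, so the lower limit collapses to $s$. The concluding claim of (4) is Remark~\ref{lk}(1): $[\phi\inbull M]$ exists iff $[\phi\inbull M]^\gen_s$ converges as $s\to-\infty$, i.e.\ iff $\int_{(-\infty,t]}\phi_s^2\,\dd[M]^\gen_s<\infty$ for all $t$. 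The genuinely substantive part is (7). Since $\phi\inbull M\in\mathcal{ILM}(\mcalF_{\bdot})$ by (2), I would apply Theorem~\ref{dbb}(2) to it: its criterion---that $\lim_{s\to-\infty}\phi\inbull M_s$ exists $P$-a.s.\ and the centred process lies in $\mathcal{LM}(\mcalF_{\bdot})$, which by the definition of the improper integral is exactly condition (b) here---is equivalent to $[\phi\inbull M]$ existing with $[\phi\inbull M]^{1/2}\in\mathcal{LA}_0^1(\mcalF_{\bdot})$. By (4), when it exists $[\phi\inbull M]_t=\int_{(-\infty,t]}\phi_s^2\,\dd[M]^\gen_s$, so the latter condition is literally $\phi\in\mathcal{L}L^1(M)$ (and membership already forces the integral to be finite, hence $[\phi\inbull M]$ to exist), closing the equivalence.

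The main obstacle I anticipate is bookkeeping rather than any single deep computation. Because $\phi\inbull M$, $[M]^\gen$ and $[\phi\inbull M]^\gen$ are each determined only up to an additive random variable, every identity must be carried on increments ($\Peq$ after applying $\Ds$) or stated as $\inl$, and the passage between a generalised quadratic variation $[\cdot]^\gen$ and a genuine one $[\cdot]\in\mathcal{A}_0$ in (4) and (7) must be justified---precisely the role of Remark~\ref{lk}(1) and Theorem~\ref{dbb}(2). A secondary, pervasive point is checking the class-membership conditions (closure of $\mathcal{IL}L^1(M)$ under linear combinations, multiplication by bounded predictable factors, and stopping) so that each integral appearing in (3)--(6) is defined before the corresponding identity is asserted.
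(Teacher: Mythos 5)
Your proposal is correct and follows essentially the same route as the paper: every claim is reduced, via the defining property $\Ds(\phi\inbull M)\Peq\phi\bullet(\Ds M)$, to standard facts about ordinary stochastic integrals against the local martingales $\Ds M$, with Remark \ref{lk}(1) handling the existence claim in (4) and Theorem \ref{dbb}(2) applied to $\phi\inbull M$ yielding (7). You additionally write out the arguments for (5) and (6), which the paper leaves to the reader, and these are the intended ones (via \eqref{go}, \eqref{nimm1}, \eqref{nimm2} and associativity of the ordinary integral).
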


\begin{remark} (a) When  $M$ is continuous it follows from Theorem
  \ref{vbb} that (7) can be simplified to the statement that  $\phi\inbull
  M_{-\infty}= \lim_{s\to -\infty} \phi\inbull M_s$ exists $P$-a.s.\  if and
  only if $\phi \in  \mathcal{L}L^1(M)$, and in this case
  $(\int_{(-\infty, t]}\phi_u \, \dd M_u)_{t\in \R}  \in
  \mathcal{LM}(\mcalF_{\bdot})$. 

(b)  Result (7) above gives a necessary and sufficient condition for
the improper integral to exist and be a local martingale; however, 
improper integrals may exist without being a local martingale (but as
noted above they are  always   increment local  martingales). 
 For example, assume $M$
  is purely discontinuous and that  the compensator  $\nu^M$ of the jump
  measure  $\nu^M$   can be decomposed as
$\nu^M(\cdot; \dd t \times \dd x)=
F(\cdot;t,  \dd x) \mu(\dd t) $ where $F(\cdot;t,  \dd x)$ is a symmetric
measure and $\mu(\{t\})=0$ for all   $t\in \R$. 
 Then by  Theorem \ref{dbb} (3), $\phi\inbull
  M_{-\infty}$ exists $P$-a.s.\ if and only if the quadratic
  variation $[\phi \inbull M]$ exists; that is, 
\begin{equation*}
\sum_{s\leq 0} \phi_s^2 (\Delta M_s)^2 <\infty\quad P\mbox{-}a.s. 
\end{equation*}
 
\end{remark}

\begin{proof}
  Property (1) is merely by definition, and (2) is due to the fact
  that $\Ds (\phi \inbull\nobreak M) \Peq \phi \bullet \Ds M$, which is a
  local martingale.

  (3) We must show that $a(\phi \inbull M) + b(\psi \inbull M)$ is
  associated with $\{ (a\phi +\nobreak b \psi)\bullet (\Ds M)\}_{s\in \R}$,
  i.e.\ that $ \Ds\big(a(\phi \inbull M) + b(\psi \inbull M)\big) \Peq
  (a\phi +b \psi)\bullet (\Ds M)$.  However, by definition of the
  stochastic increment integral and linearity of the stochastic
  integral we have
\begin{align*}
a\, \Ds\big(\phi \inbull  M\big) + b\,  \Ds\big(\psi \inbull  M\big)
\Peq  a \big(\phi \bullet (\Ds M)\big)  +
b\big(\psi \bullet (\Ds M)\big) \Peq  (a \phi+  b\psi) \bullet (\Ds M).
\end{align*}

(4) Using that  $\Ds (\phi \inbull M) =
\phi \bullet (\Ds M)$ and  $\Delta \phi \bullet (\Ds M)\Peq  \phi \Delta
(\Ds M)$,  the result in \eqref{h1} follows. 
  By definition, $[\phi \inbull  M]^{\gen}$ is  associated with 
$
\{[\Ds(\phi \inbull M)]\}_{s\in \R} = \{[\phi\bullet (\Ds M)]\}_{s\in \R}
$.
That is, for $s \in \R$ we have, using that  $[M]^{\gen}$ is
associated with $\{[\Ds M]_s\}_{s\in \R}$,  
\begin{align*}
\Dsu [\phi \inbull M]^{\gen}_t &=[\phi \bullet (\Ds M)]_t
 = \int_{(s,t]} \phi_u^2 \, \dd [\Ds M]_u \\
&=\int_{(s,t]} \phi_u^2 \,
  \dd (\Dsu [ M]^{\gen})_u =  \int_{(s,t]} \phi_u^2 \,
  \dd [ M]_u^{\gen} \quad \mbox{for } s\leq t \quad  P\mbox{-a.s.},
\end{align*} 
which  yields \eqref{h2}. The last statement in (4) follows from
Remark \ref{lk} (1).

The proofs of (5) and (6)  are  left to the reader.

(7) Using (4) the result follows immediately from  Theorem \ref{dbb}. 
\end{proof}

Let us turn to the definition of a  stochastic integral $\phi\bullet
M$ of a predictable $\phi$ with respect to an  increment local 
martingale $M$. Thinking of $\phi\bullet M_t$ as an integral from
$-\infty$ to $t$ it seems reasonable to say that $\phi\bullet M$
(defined for a suitable class of predictable processes $\phi$)  is a stochastic
integral with respect to $M$ if the following is satisfied: 
\begin{itemize}
\item[(1)] $\lim_{t\to -\infty} \phi\bullet M_t=0$ $P$-a.s.
\item[(2)] $\phi_t\bullet M_t - \phi\bullet M_s=   \int_{(s,t]} \phi_u
  \, \dd M_u$ $P$-a.s.\ for all $s<t$
\item[(3)] $\phi\bullet M$ is a local martingale.  
\end{itemize}
By definition of $\int_{(s,t]}\phi_u\, \dd M_u$, (2)  implies
that $\phi\bullet M$ must be    an increment integral of $\phi$ with respect to
$M$.  Moreover, since we assume  $\phi\bullet M_{-\infty}=0$,     $\phi\bullet M$  is uniquely determined as
$(\phi\bullet M_t)_{t\in
  \R} \Peq (\int_{(-\infty, t]} \phi_u \, \dd M_u)_{t\in \R}$, i.e.\   the improper integral of $\phi$.  Since we
also insist that $\phi\bullet M$ is a local martingale, Theorem \ref{inc}~(7) shows that $\mathcal{L}L^1(M)$ is the largest possible set on
which $\phi\bullet M$ can be defined.  We summarise these findings as follows.

\begin{theorem} 
Let $M\in \mathcal{ILM}(\mathcal{F}_{\bdot})$. Then there exists a unique
stochastic integral $\phi\bullet M$  defined  for $\phi \in \mathcal{L}L^1(M)$.  This integral is
given by 
\begin{equation}
\phi\bullet M_t = \int_{(-\infty, t]} \phi_u\, \dd M_u \quad \mbox{for
} t\in \R
\end{equation}
and it satisfied the following.   

\begin{enumerate}
\item[(1)]  $\phi\bullet M \in \mathcal{LM}(\mcalF_{\bdot})$ and 
  $\phi\bullet M_{-\infty}=0$  for  $\phi \in \mathcal{L}L^1(M)$.
\item[(2)] The mapping $\phi \mapsto \phi\bullet M$  is, up to
  $P$-indistinguishability, linear in $\phi\in\mathcal{L}L^1(M) $.
\item[(3)] For $\phi \in\mathcal{L} L^1(M)$ we have 
\begin{align*} 
&\Delta \phi \bullet M_t = \phi_t \Delta M_t, \quad   \mbox{ for } t\in
    \R,\ P\mbox{-a.s.}\\
&[\phi \bullet M]_t  = \int_{(-\infty, t]} \phi_s^2 \, \dd
  [M]_s^{\gen}\quad   
    \mbox{for } t\in \R,\  P\mbox{-a.s.} 
\end{align*}
\item[(4)] For $\sigma$ a stopping time,  $s\in \R$ and $\phi \in
  \mathcal{L}L^1(M)$   we  have 
\begin{equation*}
(\phi \bullet M)^\sigma \Peq (\phi 1_{(-\infty, \sigma]})\bullet M
\Peq 
\phi \bullet (M^\sigma) 
\end{equation*}
and $\Ds (\phi \bullet M) \Peq  \phi \bullet (\Ds M)$. 
\end{enumerate}
\end{theorem}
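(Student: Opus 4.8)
The plan is to reduce every assertion to the corresponding property of the increment integral established in Theorem~\ref{inc}, using that $\phi\bullet M$ is nothing but the distinguished version of $\phi\inbull M$ that vanishes at $-\infty$. First I would record the elementary inclusion $\mathcal{L}L^1(M)\subseteq \mathcal{IL}L^1(M)$: since $\int_{(s,t]}\phi_u^2\,\dd[M]_u^\gen \leq \int_{(-\infty,t]}\phi_u^2\,\dd[M]_u^\gen$ and $[\Ds M]\Peq \Dsu([M]^\gen)$ by \eqref{bx}, membership of $\phi$ in $\mathcal{L}L^1(M)$ forces $\phi\in \mathcal{L}L^1(\Ds M)$ for every $s$. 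Hence for $\phi\in\mathcal{L}L^1(M)$ the increment integral $\phi\inbull M$ and all of Theorem~\ref{inc} are available.

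Existence together with the defining formula is then immediate from Theorem~\ref{inc}~(7): for $\phi\in\mathcal{L}L^1(M)$ the limit $\phi\inbull M_{-\infty}=\lim_{s\to-\infty}\phi\inbull M_s$ exists $P$-a.s.\ and $(\int_{(-\infty,t]}\phi_u\,\dd M_u)_{t\in\R}=(\phi\inbull M_t-\phi\inbull M_{-\infty})_{t\in\R}$ is an adapted local martingale vanishing at $-\infty$. I would simply \emph{define} $\phi\bullet M_t:=\int_{(-\infty,t]}\phi_u\,\dd M_u$; it then meets the three requirements (1)--(3) singled out before the statement. Uniqueness repeats the reasoning preceding the theorem: any process obeying those requirements is, by requirement~(2) and the definition of $\int_{(s,t]}\phi_u\,\dd M_u$, an increment integral of $\phi$; being in addition a local martingale with limit $0$ at $-\infty$, it must coincide $P$-a.s.\ with the improper integral, the unique increment integral vanishing at $-\infty$.

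For the listed properties I would transfer each item of Theorem~\ref{inc} to $\phi\bullet M$. Property~(1) is exactly what the construction delivers. Property~(2) follows from Theorem~\ref{inc}~(3): $(a\phi+b\psi)\inbull M\inl a(\phi\inbull M)+b(\psi\inbull M)$; since all summands lie in $\mathcal{L}L^1(M)$ and hence converge at $-\infty$, and $\phi\bullet M$ is the version that is $0$ there, the $\inl$-identity upgrades to the asserted $P$-indistinguishability. The jump formula in~(3) is \eqref{h1}, and the quadratic-variation formula comes from \eqref{h2}: because $\phi\in\mathcal{L}L^1(M)$ guarantees $\int_{(-\infty,t]}\phi_s^2\,\dd[M]_s^\gen<\infty$, the process $[\phi\inbull M]$ exists by the last assertion of Theorem~\ref{inc}~(4), coincides with $[\phi\bullet M]$, lies in $\mathcal{A}_0(\mcalF_{\bdot})$, and letting $s\to-\infty$ in $\Dsu[\phi\inbull M]_t^\gen=\int_{(s,t]}\phi_u^2\,\dd[M]_u^\gen$ yields the stated value. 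Property~(4) transports Theorem~\ref{inc}~(5) and~(1): I would check $\phi 1_{(-\infty,\sigma]}\in\mathcal{L}L^1(M)$ and $\phi\in\mathcal{L}L^1(M^\sigma)$ (both by domination of the integrators after stopping, using $([M]^\gen)^\sigma\inl[M^\sigma]^\gen$), so that all four processes in the display converge at $-\infty$ and the $\inl$-relations become $\Peq$; the identity $\Ds(\phi\bullet M)\Peq\phi\bullet(\Ds M)$ follows from $\Ds(\phi\inbull M)_t=(\phi\bullet(\Ds M))_t$ in Theorem~\ref{inc}~(1) and the fact that $\phi\bullet M$ and $\phi\inbull M$ have identical increments.

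The one genuinely delicate point, recurring throughout, is the passage from $\inl$ to $\Peq$. Every identity furnished by Theorem~\ref{inc} holds only up to addition of a random variable, so to conclude $P$-indistinguishability I must verify in each case that \emph{all} processes entering the identity individually possess finite $P$-a.s.\ limits at $-\infty$ and that these limits are $0$ for the normalized versions. This is precisely where membership in the various $\mathcal{L}L^1$ classes, rather than merely $\mathcal{IL}L^1$, is used, and it is the only step requiring real care beyond quoting Theorem~\ref{inc}.
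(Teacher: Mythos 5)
Your proposal is correct and follows essentially the same route as the paper: the paper gives no separate proof block for this theorem, which merely summarises the preceding discussion (uniqueness of an increment integral vanishing at $-\infty$) together with Theorem~\ref{inc}, exactly as you do. Your explicit verification of the inclusion $\mathcal{L}L^1(M)\subseteq\mathcal{IL}L^1(M)$ by domination and your care in upgrading $\inl$ to $\Peq$ via vanishing limits at $-\infty$ spell out steps the paper leaves implicit, but the substance is identical.
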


\begin{example} Let $X\in \mathcal{ILM}(\mcalF_{\bdot})$ be continuous and
  assume  there is a positive continuous predictable process
  $\sigma=(\sigma_t)_{t\in \R}$ such that for all $s<t$, $\Dsu 
  [X]_t^\gen 
  =\int_s^t \sigma_u^2\, \dd u$. Set $B= \sigma^{-1} \inbull X$ and
  note that by L\'evy's theorem $B$ is a standard Brownian motion
  indexed by  $\R$, and $X$ is given by $X\inl \sigma\inbull B$. 
\end{example}

\begin{example}
As a last example assume  $B=(B_t)_{t\in \R}$ is a Brownian motion
indexed by   $\R$ and consider the filtration
$\mathcal{F}_{\bdot}^{\mathcal{I}B}$ generated by the increments of $B$ cf.\
Example \ref{qsx}. In this case a predictable $\phi$ is in
$\mathcal{L}L^1(B)$  resp.\ $\mathcal{IL}L^1(B)$ if and only if
$\int_{-\infty}^t \phi_u^2\, \dd u<\infty$ for all $t$ $P$-a.s.\
resp.\ $\int_{s}^t \phi_u^2\, \dd u<\infty$ for all $s< t$ $P$-a.s.
Moreover,  if $M\in \mathcal{ILM}(\mathcal{F}_{\bdot}^{\mathcal{I}B})$ then
there is a $\phi \in \mathcal{IL}L^1(B)$ such that 
\begin{equation}\label{uh}
M\inl \phi\inbull  B
\end{equation}
and if  $M\in \mathcal{LM}(\mathcal{F}_{\bdot}^{\mathcal{I}B})$ then
there is a $\phi \in \mathcal{L}L^1(B)$ such that 
\begin{equation} \label{uh1}
  M\Peq  M_{-\infty} +  \phi\bullet B. 
\end{equation}
That is, we have a \textit{martingale representation result} in the  filtration
$\mathcal{F}_{\bdot}^{\mathcal{I}B}$. To see that this is the case, it
suffices to prove \eqref{uh}. Let $s\in \R$ and set $\mathcal{H}=
\mathcal{F}_s^{\mathcal{I}B}$. Since $\mathcal{F}_t^{\mathcal{I}B}=
\mathcal{H} \vee  \sigma(B_u-B_s : s\leq u\leq t)$ for $t\geq s$ it
follows  from  Jacod and Shiryaev (2003), Theorem III.4.34,  that
there is a  $\phi^s$ in
$\mathcal{L}L^1(^sB)$ such that $\Ds M\Peq \phi^s \bullet (\Ds
B)$. If $u<s$ then by \eqref{kl} and \eqref{nimm2} we have
$\Ds M= \phi^u  \bullet (\Ds B)$; thus, there is a $\phi$ in 
$ \mathcal{IL}L^1( B)$  such that $\Ds M \Peq \phi \bullet
(\Ds B)$ for all $s$  and hence $M \inl \phi\inbull  B$ by definition of the
increment integral.

The above generalises in an obvious way  to the case where instead
of a Brownian motion $B$ we have, say, a L\'evy process $X$ with
integrable centred increments. In this case, we have to add an integral with
respect to $\mu^X-\nu^X$ on the right-hand sides of \eqref{uh} and
\eqref{uh1}. 

\end{example}


\begin{thebibliography}{10}

\bibitem{BNSch07}  Barndorff-Nielsen, O.\ E.\ and Schmiegel, J.\ (2008) 
\textit{A stochastic differential equation framework for the timewise
  dynamics of turbulent velocities} Theory  Probability  Appl.\    {\bf 52},  
pp.\ 372--388.  

\bibitem{ABOCetal} Basse-O'Connor, A., Graversen, S.-E.\ and Pedersen,
  J.\ (2010) \textit{Stochastic integration on the real line}  In
  preparation. 


\bibitem{Br} Br{\'e}maud, P.\ (1981) \textit{Point processes and
    queues}  Springer-Verlag New York. 


\bibitem{C_W_stoc_int}
 Cairoli, R. and  Walsh, J.~B.\   (1975) 
 \textit{Stochastic integrals in the plane} 
  Acta Math.\ {\bf  134}, pp.\  111--183.

\bibitem{C_W_martingale}
 Cairoli, R.\ and  Walsh,  J.~B.\  (1977)
 \textit{Martingale representations and holomorphic processes}
  Ann. Probability~{\bf  5}, pp.\   511--521.


\bibitem{Cher} Cherny, A.\ and Shiryaev, A.\ (2005) \textit{ On
    Stochastic Integrals up to Infinity and Predictable Criteria for
    Integrability} S\'eminaire de {P}robabilit\'es {XXXVIII}, pp.\
  165--185. 

\bibitem{Doob} Doob, J.\ L.\ (1990) \textit{Stochastic processes} 
  John Wiley \& Sons Inc New York. 

\bibitem{Jacod} Jacod, J.\ (1979) \textit{Calcul stochastique et
    probl\`emes de martingales} Lecture Notes in Mathematics, 714,
  Springer-Verlag Berlin.   


\bibitem{JS} Jacod, J.\ and Shiryaev, A.\ N.\ (2003)
  \textit{Limit theorems for stochastic processes},  Springer-Verlag Berlin.



\bibitem{Kurtz}
Kurtz, T.~G. (1980)
\textit{The optional sampling theorem for martingales indexed by directed
   sets}  { Ann. Probab.\/}~{\bf  8}, pp.\  675--681.


\bibitem{RW} Rogers, L.\ C.\ G.\  and Williams, D.\ (2000)
  \textit{Diffusions, {M}arkov processes, and martingales. {V}ol. 2} 
  Cambridge University Press. 

\bibitem{Samo}
Samorodnitsky, G.\  and Taqqu, M.\ S.\  (1994) \textit{Stable
  non-{G}aussian random processes} Chapman \& Hall. 


\bibitem{Sato} Sato, K.\  (1999) \textit{L\'evy processes and
    infinitely divisible distributions} Cambridge University Press.

\bibitem{sa} Sato, K.\  (2006) \textit{Monotonicity and
  non-monotonicity of domains of stochastic  integral operators}
Probab.\ Math.\  Statist.\ {\bf 26}, pp.\  23--39.


\end{thebibliography}
\end{document}